\newcommand{\Z}{\mathbb{Z}}
\newcommand{\R}{\mathbb{R}}
\newcommand{\C}{\mathbb{C}}
\newcommand{\N}{\mathbb{N}}
\newcommand{\al}{\alpha}
\newcommand{\ga}{\gamma}
\newcommand{\del}{\delta}
\newcommand{\ep}{\epsilon}
\newcommand{\sig}{\sigma}
\newcommand{\om}{\omega}
\newcommand{\Om}{\Omega}
\newcommand{\ol}{\overline}
\newcommand{\br}{\vspace{3 mm}}
\newcommand{\imp}{\Rightarrow}
\newcommand{\id}{{\rm{id}}}
\newcommand{\diam}{{\rm{diam\,}}}
\newcommand{\card}{{\rm{card\,}}}
\newcommand{\Homeo}{\rm{Homeo}}
\theoremstyle{plain}
\newtheorem{thm}{Theorem}[section]
\newtheorem{lem}[thm]{Lemma}
\newtheorem{prop}[thm]{Proposition}
\theoremstyle{definition}
\newtheorem{rmk}[thm]{Remark}
\def\g{\gamma}
\newcounter{quotecount}
\newenvironment{psmallmatrix}
  {\left(\begin{smallmatrix}}
  {\end{smallmatrix}\right)}
\begin{document}

\title[A metric minimal PI cascade with $2^{\frak{c}}$ minimal ideals]
{A metric minimal PI cascade with $2^{\frak{c}}$ minimal ideals
}


\author[Eli Glasner and Yair Glasner]{Eli Glasner and Yair Glasner}
\address{Department of Mathematics,
Tel Aviv University, Tel Aviv, Israel}
\email{glasner@math.tau.ac.il}

\address{Department of Mathematics,
Ben-Gurion University of the Negev, BeÕer Sheva, Israel}
\email{yairgl@math.bgu.ac.il}

%


%

\begin{date}
{28 September, 2017}
\end{date}
%



\begin{abstract}
We first improve an old result of McMahon and show that 
a metric minimal flow whose enveloping semigroup contains less than 
$2^{\frak{c}}$ (where ${\frak{c}} ={2^{\aleph_0}}$) minimal left ideals is PI.
Then we show the existence of various minimal PI flows with many minimal left ideals, as follows.
For the acting group $G=SL_2(\R)^\N$,
we construct a metric minimal PI $G$-flow with $\frak{c}$ minimal left ideals.
We then use this example and results established in \cite{GW-79}
to construct a metric minimal PI cascade $(X,T)$ with
$\frak{c}$ minimal left ideals.
We go on and construct an example of a minimal PI-flow $(Y, \mathcal{G})$ 
on a compact manifold $Y$ and a suitable path-wise connected group $\mathcal{G}$
of homeomorphism of $Y$, such that the 
flow $(Y, \mathcal{G})$ is PI and has $2^{\frak{c}}$ minimal left ideals. 
Finally, we use this latter example and a theorem of Dirb\'{a}k to construct
a cascade $(X, T)$ which is PI (of order 3) and has 
$2^\frak{c}$ minimal left ideals.
Thus this final result shows that, even for cascades, the converse 
of the 
implication ``less than $2^\frak{c}$ minimal left ideals implies PI", fails.

\end{abstract}
 
\subjclass[2010]{Primary 54H20, 37B05}

\keywords{Enveloping semigroup, PI-flow, minimal left ideals}

\thanks{The research of E. G. was supported by a grant 
of the Israel Science Foundation (ISF 668/13)}

\thanks{The research of Y. G. was supported by a grant 
of the Israel Science Foundation (ISF 2095/15)}

\maketitle

\section*{Introduction}

For a compact metric space $X$ let $\Homeo(X)$ denote the Polish group
of self homeomorphisms of $X$ equipped with the compact open topology.
In this work a {\em $G$-flow} $(X, G)$ is a pair consisting of a compact (usually metrizable) space $X$
 and a continuous homomorphism of the topological group $G$ into  $\Homeo(X)$.
 We usually write $(g, x) \mapsto gx$ for the action of $G$ on $X$ which is defined via
 this representation. A flow $(X,\Z)$, with the group of integers $\Z$ as the acting group, is called a
{\em cascade} and is usually denoted as $(X, T)$ where $T$ is the homeomorphism
which corresponds to $1 \in \Z$.

For a flow $(X,G)$ let $E(X,G)$ denote its enveloping semigroup.
Recall that $E(X,G)$ is defined as the closure, in the compact space $X^X$,
of the collection of homeomorphisms that is the image of the homomorphism from $G$ 
into $\Homeo(X)$ which defines the dynamical system $(X,G)$.
$E(X,G)$ has a structure of a compact right topological semigroup and it is a $G$-dynamical system
as well. By a theorem of Ellis it always has minimal left ideals, which coincide with its minimal subsystems.
We let ${\rm{mi}}(X,G)$ denote the cardinality
of the collection of minimal left ideals in $E(X,G)$.
For the definition of PI-flows and for more details on enveloping semigroups 
and the structure theory of minimal flows we refer the reader to \cite{Gl-PF}, \cite{V}  
and \cite{Au}. (See also Section \ref{sec1} below.)

The main result of \cite{Gl-75}, actually stated as the title of that paper, is that (for any acting group $G$)
a metric minimal flow whose enveloping semigroup contains finitely many minimal left ideals is PI.
This result was later greatly improved by McMahon \cite{M}, who showed that,  
a metric minimal flow 
with ${\rm{mi}}(X, G)  <  2^{\aleph_1}$ is PI.
In the first section of this note, following a suggestion of Akin,  
we improve McMahon's result and show that 
a metric minimal flow whose enveloping semigroup contains less than 
$2^{\frak{c}}$ (where ${\frak{c}} ={2^{\aleph_0}}$) minimal left ideals is PI.

At the end of \cite{Gl-75} the first named author claimed that a certain metric minimal PI $G$-flow $(X,G)$, 
with $G = SL(2,\R)$,
has an enveloping semigroup $E(X,G)$ with ${\frak{c}}$ minimal left ideals.
Unfortunately the argument given in \cite[Example on page 91]{Gl-75}
does not at all show this. Instead it  
actually shows that each minimal left ideal in $E(X,G)$ contains  ${\frak{c}}$ idempotents.
In Section \ref{sec2} we construct an example of a metric minimal PI flow $(\Om, {\bf{G}})$
(of order $2$), with ${\bf{G}} = G^\N = SL_2(\R)^\N$ and
with the property that ${\rm{mi}}(\Om, {\bf{G}}) = {\frak{c}}$.
Then, in Section \ref{cascadec}, we use this example and results established in \cite{GW-79}
to construct a metric minimal PI cascade $(X,T)$ (of order $3$)
with ${\rm{mi}}(X,T) = {\frak{c}}$.
In Section \ref{cantor} we construct a minimal PI-flow $(X, G)$ (of order $2$) on the Cantor set $X$,
with a suitable acting group $G$ and 
with ${\rm{mi}}(X, G) = 2^{\frak{c}}$.
Finally, in Section \ref{final},
we construct a minimal PI cascade (of order 3)  $(X, T)$
with ${\rm{mi}}(X, T) = 2^{\frak{c}}$.
Thus this final result shows that, even for cascades, the converse 
of the 
implication, 
${\bf{mi}}(X,T)  <  2^\frak{c} \imp PI$,
does not hold.
\footnote{The Morse minimal set $(X,T)$, which is PI of order 3, has ${\rm{mi}}(X,T) = 2$
(see \cite[Theorem 3.1]{HJ-99}).
In Remark \ref{2^n} at the end of Section \ref{cascadec} we indicate how to construct, given $n \in \N$, a
minimal PI cascade with $2^n$ minimal left ideals.
It seems that it should not be too hard to construct minimal PI cascades
with ${\rm{mi}}(X,T) = n$ for any $n =1,2,\dots, \aleph_0$.
However, we leave this question open.}


To sum up, we show in this work that the range of the function ${\bf{mi}}(\cdot)$, on the domain of
minimal metrizable cascades, includes 
 the set of cardinals $\{2^\eta : \eta = 0,1,2,\dots, \aleph_0, \aleph_1, \dots, \mathfrak{c}\}$.
It seems that it should not be too hard to construct minimal PI cascades
with ${\bf{mi}}(X,T) = n$ for any 
$n \in \N$.
However, we leave that question open
\footnote{The Morse minimal set $(X,T)$, which is PI of order 3, has ${\bf{mi}}(X,T) = 2$
(see \cite[Theorem 3.1]{HJ-99}) and \cite{S-17}.}.

We thank Ethan Akin for suggesting the approach taken in the proof of 
Theorem \ref{2c}, and Petra Staynova for pointing out the error in \cite{Gl-75}
and for producing the figure of the equivalent minimal idempotents
$u$ and $u'$.

\br

\section{Non PI-flows have $2^{\frak{c}}$ minimal left ideals}\label{sec1}

\br

The way we will usually control the number of minimal left ideals in an enveloping semigroup $E$
is by computing the number of minimal idempotents in $E$ which are equivalent to a given
minimal idempotent $u$. We recall the definitions and basis facts concerning these notions.
(See e.g. \cite[Chapter 1, Section 2]{Gl-PF}, or \cite[Chapter 1]{Gl-03}.)

\begin{thm}\label{exe-envel}
\begin{enumerate}
\item
A subset $M$ of $E$ is a minimal left
ideal of the semigroup $E$ iff it is a minimal
subsystem of $(E,G)$. In particular a
minimal left ideal is closed.
Minimal left ideals $M$ in $E$ exist and for each
such ideal the set of idempotents in $M$, denoted
by $J=J(M)$, is non-empty.
We say that an idempotent in $E$ is a {\em minimal idempotent}
if it belongs to some minimal left ideal
\item
Let $M$ be a minimal left ideal and $J$ its set
of idempotents then:
\begin{enumerate}
\item[(a)]  For $v\in J$ and $p\in M$, $pv=p$.
\item[(b)] For each $v\in J$,\ $vM=
\{vp:p\in M\}$ is a subgroup of $M$ with
identity element $v$.
For every $w\in J$ the map $p\mapsto wp$
is a group isomorphism of $vM$ onto $wM$.
\item[(c)] $ \{vM:v\in J\}$ is a partition
of $M$. Thus if $p\in M$ then there exists a unique
$v\in J$ such that $p\in vM$;\ we denote
by $p^{-1}$ the inverse of $p$ in $vM$.
\end{enumerate}
\item
Let $K,L,$ and $M$ be minimal left ideals of $E$.
Let $v$ be an idempotent in $M$, then there exists
a unique idempotent $v'$ in $L$ such that
$vv'=v'$ and $v'v=v$. (We write $v\sim v'$
and say that $v'$ is {\em equivalent\/} to $v$.)
If $v''\in K$ is equivalent to $v'$, then
$v''\sim v$. The map $p\mapsto pv'$ of
$M$ to $L$ is an isomorphism of $G$-systems.
\end{enumerate}
\end{thm}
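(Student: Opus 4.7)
The statement is a standard package of structural facts about minimal left ideals in a compact right topological semigroup; see \cite{Gl-PF,Au,V} for detailed treatments. The plan is to deduce everything from two tools used throughout: the \emph{Ellis--Numakura lemma}, which produces an idempotent in any compact right topological semigroup, and the \emph{right-continuity} of multiplication in $E$, which makes each map $p \mapsto pq$ continuous and in particular keeps sets $Fq$ compact when $F$ is.

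\emph{For (1)}, the $G$-action on $E$ is left multiplication via the dense inclusion $G \hookrightarrow E$. For closed $M$, the invariance $GM \subseteq M$ combined with right-continuity (take $g_\alpha \to e$ in the first coordinate, so $g_\alpha m \to em$ with $g_\alpha m \in M$) extends to $EM \subseteq M$. Thus closed $G$-invariant subsets coincide with closed left ideals, and minimal subsystems with minimal left ideals. Every minimal left ideal is closed since each principal ideal $Ep$ is closed (continuous image of compact $E$). Existence of minimal left ideals is a Zorn argument on closed left ideals ordered by reverse inclusion. Finally, each minimal $M$ is a compact right topological subsemigroup ($MM \subseteq EM \subseteq M$), so Ellis--Numakura produces an idempotent in $M$.

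\emph{For (2)}, in (a) the set $Mv$ is a closed left ideal contained in $M$ (closed by right-continuity in $v$; a left ideal since $E(Mv) = (EM)v \subseteq Mv$; contained in $M$ because $v \in M$), so $Mv = M$ by minimality, and writing $p = qv$ gives $pv = qv^2 = qv = p$. For (b), $vM$ is closed under multiplication, has $v$ as two-sided identity (left by idempotence, right by (a)), and admits left inverses because $M \cdot vp$ is a non-empty left ideal of $M$, hence equals $M$, so some $s \in M$ has $s(vp) = v$ and $vsv \in vM$ is a left inverse of $vp$; any semigroup with two-sided identity and left inverses is a group. The map $\phi_w: p \mapsto wp$ is a homomorphism via $(wp)(wq) = w(pw)q = wpq$, using $pw = p$ from (a); its inverse is $\phi_v$ because $vw$ is an idempotent in $vM$ (since $(vw)^2 = v(wv)w = vww = vw$ using $wv = w$ from (a), and $v(vw) = vw$ places $vw \in vM$), hence $vw$ equals the group identity $v$, and $\phi_v\phi_w(p) = vwp = vp = p$. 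For (c), given $p \in M$ pick $q \in M$ with $qp = p$ from $Mp = M$, and apply Ellis--Numakura to $\overline{\langle q \rangle} \subseteq M$ to get $v \in J$ with $vp = p$; uniqueness follows because $p \in eM \cap e'M$ forces (via the inverses of $p$ in the two groups) $ee' = e'$ and $e'e = e$, which imply $eM = e'M$ as a set, whose group identity is unique.

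\emph{For (3)}, the main task -- and the chief obstacle -- is producing the idempotent $v' \in J(L)$ satisfying both $vv' = v'$ and $v'v = v$. The plan is to apply Ellis--Numakura to a suitable compact right topological subsemigroup of $L$. A workable choice is $S := \overline{L \cap vE}$, noting that $L \cap vE \ni vq$ for any $q \in L$ (so $S$ is non-empty), that $L \cap vE$ is closed under multiplication (if $p, q$ lie in it then $pq \in L$ by the left-ideal property and $v(pq) = (vp)q = pq$), and that the closure inherits this semigroup structure. An Ellis--Numakura idempotent $v' \in S$ automatically satisfies $vv' = v'$; the relation $v'v = v$ then follows because $v'v \in M$ (left-ideal property), $(v'v)^2 = v'(vv')v = v'v'v = v'v$, and $v(v'v) = (vv')v = v'v$ places $v'v$ in the group $vM$, where any idempotent must equal the identity $v$. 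Uniqueness of $v'$ is by the argument in (2)(c) applied in $L$. Finally, $p \mapsto pv'$ is continuous (right-continuity), $G$-equivariant (right and left multiplications commute), carries $M$ onto the closed left ideal $Mv' \subseteq L$ which equals $L$ by minimality, and is injective because $(pv')v = p(v'v) = pv = p$ recovers $p$.
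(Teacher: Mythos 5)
The paper does not actually prove Theorem \ref{exe-envel}; it is quoted as standard background with pointers to \cite[Chapter 1, Section 2]{Gl-PF} and \cite[Chapter 1]{Gl-03}, so the comparison can only be with the classical Ellis arguments you are reconstructing. Parts (1), (2)(a), (2)(b) and both uniqueness arguments are correct and follow the standard route. The difficulty is that in (2)(c) and, more seriously, in (3) you apply Ellis--Numakura to sets that are not known to be compact subsemigroups: your own setup only grants continuity of the maps $p \mapsto pq$, never of $p \mapsto qp$.

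Concretely, in (3) you set $S = \overline{L \cap vE}$. The set $L \cap vE = \{p \in L : vp = p\}$ is cut out by a condition involving \emph{left} multiplication by $v$, which is not continuous (elements of $E$ are typically discontinuous self-maps of $X$), so this set need not be closed; the closure of a subsemigroup of a right topological semigroup need not be a subsemigroup; and even granting an idempotent $v' \in S$, membership in the closure of $L \cap vE$ does not yield $vv' = v'$. The case $L = M$ already shows the construction cannot work as written: there $\overline{vM}$ can be all of $M$, while a generic idempotent $w \in J(M)$ satisfies $vw = v \neq w$. The standard repair is to use $B = \{q \in L : qv = v\}$ instead: it is closed as the preimage of $\{v\}$ under the continuous map $q \mapsto qv$, it is a subsemigroup, and it is nonempty because $Lv$ is a closed left ideal contained in $M$, hence equal to $M$ by minimality. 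Ellis--Numakura then gives an idempotent $v' \in B$ with $v'v = v$, and $vv' = v'$ follows because $vv'$ is an idempotent lying in the group $v'L$ (your computation for $v'v$ run in mirror image). The same defect occurs in (2)(c), where $\overline{\langle q \rangle}$ need not be a subsemigroup; use the closed subsemigroup $\{r \in M : rp = p\}$ instead. (You also leave unaddressed the transitivity claim that $v'' \sim v'$ and $v' \sim v$ imply $v'' \sim v$, though that is a two-line computation.)
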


\br

Now to PI-flows.
We say that a minimal flow $(X, G)$  is a
{\em strictly PI flow} if there is an ordinal $\eta$
(which is countable when $X$ is metrizable)
and a family of flows
$\{(W_\iota,w_\iota)\}_{\iota\le\eta}$
such that (i) $W_0$ is the trivial flow,
(ii) for every $\iota<\eta$ there exists a homomorphism
$\phi_\iota:W_{\iota+1}\to W_\iota$ which is
either proximal or equicontinuous
(isometric when $X$ is metrizable), (iii) for a
limit ordinal $\nu\le\eta$ the flow $W_\nu$
is the inverse limit of the flows
$\{W_\iota\}_{\iota<\nu}$,  and
(iv) $W_\eta=X$.
We say that $(X, G)$ is a {\em PI-flow} if there
exists a strictly PI flow $\tilde X$ and a
proximal homomorphism $\theta:\tilde X\to X$.


We say that the extension $(X, G) \overset{\pi}{\to} (Y,G)$ is a RIC ({\em relatively incontractible})
{\em extension} if 
for every $n \in \N$ the minimal points are dense in the relation
$$
R^{(n)}_\pi =\{(x_1,x_2,\dots,x_n) \in X^n : \pi(x_i) = \pi(x_j) \ \forall \ 1 \le i, j \le n\}.
$$
It is easy to see that a RIC extension satisfies the {\em generalized Bronstein condition}
as defined in \cite[page 813]{V}, with $\pi_i = \pi, \ \forall \ 1 \le i \le n$. 
Also a RIC extension is necessarily an open map. Clearly every distal extension is RIC, so
in particular, every distal extension is open.

The structure theorem for the general minimal flow is proved in
\cite{EGS} and \cite{McM} (see also \cite{V}) and asserts that
every minimal flow admits a canonically defined proximal
extension which is a weakly mixing RIC extension of a strictly PI flow.
Both the Furstenberg and the Veech-Ellis structure theorems 
(for minimal distal and point-distal flows respectively) are
corollaries of this general structure theorem.
For more details on the structure theorem see \cite{Gl-PF}, \cite{V}
and \cite{Au}.

\begin{thm}[Structure theorem for minimal flows]\label{structure}
Given a minimal flow $(X, G)$, there exists an ordinal $\eta$
(countable when $X$ is metrizable) and a canonically defined
commutative diagram (the canonical PI-Tower)
\begin{equation*}
\xymatrix
        {X \ar[d]_{\pi}             &
     X_0 \ar[l]_{{\theta}^*_0}
         \ar[d]_{\pi_0}
         \ar[dr]^{\sigma_1}         & &
     X_1 \ar[ll]_{{\theta}^*_1}
         \ar[d]_{\pi_1}
         \ar@{}[r]|{\cdots}         &
     X_{\nu}
         \ar[d]_{\pi_{\nu}}
         \ar[dr]^{\sigma_{\nu+1}}       & &
     X_{\nu+1}
         \ar[d]_{\pi_{\nu+1}}
         \ar[ll]_{{\theta}^*_{\nu+1}}
         \ar@{}[r]|{\cdots}         &
     X_{\eta}=X_{\infty}
         \ar[d]_{\pi_{\infty}}          \\
        pt                  &
     Y_0 \ar[l]^{\theta_0}          &
     Z_1 \ar[l]^{\rho_1}            &
     Y_1 \ar[l]^{\theta_1}
         \ar@{}[r]|{\cdots}         &
     Y_{\nu}                &
     Z_{\nu+1}
         \ar[l]^{\rho_{\nu+1}}          &
     Y_{\nu+1}
         \ar[l]^{\theta_{\nu+1}}
         \ar@{}[r]|{\cdots}         &
     Y_{\eta}=Y_{\infty}
    }
\end{equation*}
where for each $\nu\le\eta, \pi_{\nu}$
is RIC, $\rho_{\nu}$ is isometric, $\theta_{\nu},
{\theta}^*_{\nu}$ are proximal and
$\pi_{\infty}$ is RIC and weakly mixing {\bf{of all orders}}.
For a limit ordinal
$\nu ,\  X_{\nu}, Y_{\nu}, \pi_{\nu}$
etc. are the inverse limits (or joins) of
$ X_{\iota}, Y_{\iota}, \pi_{\iota}$ etc. for $\iota
< \nu$.
Thus $X_\infty$ is a proximal extension of $X$ and a RIC
weakly mixing extension of the strictly PI-flow $Y_\infty$.
The homomorphism $\pi_\infty$ is an isomorphism (so that
$X_\infty=Y_\infty$) iff $X$ is a PI-flow.
\end{thm}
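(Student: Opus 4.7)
The plan is a transfinite induction that builds both columns of the diagram in tandem, maintaining at each stage $\nu$ the invariant that $\pi_\nu : X_\nu \to Y_\nu$ is a RIC extension. To start, I replace $X$ by a proximal extension $X_0$ which is RIC over a point. This ``RIC-ification'' of $X$ is the standard construction using the $\tau$-topology on a minimal left ideal $M$ of $E(X,G)$ and its associated Ellis group; I set $\theta^*_0 : X_0 \to X$ proximal and $Y_0 = \{\mathrm{pt}\}$, so that $\pi_0 : X_0 \to Y_0$ is trivially RIC.

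For the successor step, assuming $\pi_\nu$ is RIC, I let $Z_{\nu+1}$ be the maximal element of the family of intermediate factorizations $X_\nu \to W \to Y_\nu$ in which $W \to Y_\nu$ is isometric; it exists because this family is closed under inverse limits, and it comes with $\sigma_{\nu+1} : X_\nu \to Z_{\nu+1}$ and $\rho_{\nu+1} : Z_{\nu+1} \to Y_\nu$ isometric. The map $\sigma_{\nu+1}$ itself need not be RIC, so I \emph{RIC-ify} it: I construct proximal extensions $\theta^*_{\nu+1} : X_{\nu+1} \to X_\nu$ and $\theta_{\nu+1} : Y_{\nu+1} \to Z_{\nu+1}$ together with a RIC extension $\pi_{\nu+1} : X_{\nu+1} \to Y_{\nu+1}$ completing the commuting square. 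This is the central technical manoeuvre and is implemented by manipulating the Ellis groups of $X_\nu$ and $Z_{\nu+1}$ inside $M$ using the $\tau$-topology. At a limit ordinal $\nu$, I take $X_\nu$, $Y_\nu$, $\pi_\nu$ to be the inverse limits of the previous stages, and invoke the lemma that an inverse limit of RIC extensions of minimal flows is again RIC.

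The induction must stabilize at some ordinal $\eta$, countable when $X$ is metrizable: each successor stage strictly enlarges the Ellis-group data associated to $Y_\nu$ inside $M$, and the metric assumption bounds the relevant weight by $\aleph_0$. At termination, $\pi_\infty : X_\infty \to Y_\infty$ is RIC and admits no nontrivial intermediate isometric factor over $Y_\infty$, which is exactly the condition for $\pi_\infty$ to be weakly mixing. To upgrade this to weakly mixing \textbf{of all orders}, I use RIC essentially: the density of minimal points in every fibered power $R^{(n)}_{\pi_\infty}$ lets one iterate the argument along the diagonal and conclude inductively that each such fibered power has no nontrivial isometric factor over its diagonal — which is precisely the meaning of ``weakly mixing of all orders''. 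The isomorphism criterion ($\pi_\infty$ an isomorphism iff $X$ is PI) then drops out from the construction, since a strictly PI tower is preserved from $Y_\infty$ downwards.

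I expect the principal obstacle to be the RIC-ification at successor stages — producing the two proximal extensions so that the resulting square is genuinely RIC, rather than merely open, requires the full $\tau$-topology arithmetic of the Ellis group on $M$, and this is historically what makes the structure theorem hard; the ``upgrade to weakly mixing of all orders'' for the top extension is the other delicate step. My plan is to draw on the $\tau$-topology machinery of Ellis--Glasner--Shapiro and McMahon rather than rederive it, citing \cite{EGS}, \cite{McM}, \cite{V}, \cite{Au}, and \cite{Gl-PF}.
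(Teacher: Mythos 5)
The paper does not actually prove this theorem: it is imported from the literature, with the remark that the ``weakly mixing of all orders'' enhancement follows from Theorem 2.6.2 of \cite{V} or from a slight modification of the proof in \cite[Chapter 14, Theorem 27]{Au}. Your outline is a faithful sketch of exactly that standard construction --- the shadow-diagram/RIC-ification at successor stages, maximal intermediate isometric factors, inverse limits at limit ordinals, stabilization via the $\tau$-topology on the Ellis group --- and, like the paper, you defer the genuinely hard steps to \cite{EGS}, \cite{McM}, \cite{V}, \cite{Au}, so at the level of strategy there is nothing to fault. One imprecision is worth fixing: ``weakly mixing of all orders'' is defined (as the paper states immediately after the theorem) by topological transitivity of each fibered power $R^{(n)}_{\pi_\infty}$, not by the absence of nontrivial intermediate isometric factors of those powers over the diagonal; the equivalence of the two conditions for RIC extensions is itself the deep dichotomy theorem you are invoking, so your final step should be phrased as an application of that dichotomy (or directly of \cite[Theorem 2.6.2]{V}) rather than as a matter of definition. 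Similarly, $\pi_0:X_0\to Y_0$ is not ``trivially'' RIC when $Y_0$ is a point --- that is precisely the incontractibility that the base-case RIC-ification is designed to produce --- and the ``if $X$ is PI then $\pi_\infty$ is an isomorphism'' direction needs the fact that a nontrivial weakly mixing RIC extension cannot occur inside a PI flow, which does not quite ``drop out'' but again is standard in the cited sources.
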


The weak mixing of all orders of the extension $\pi_\infty$ means that for
every $n \in \N$ the relation
$$
R_{\pi_\infty}^{(n)} =
\{(x_1,x_2, \dots, x_n) \in X_\infty^n : \pi_\infty(x_i) = \pi_\infty(x_j), \ \forall \ 1 \le i, j \le n\}
$$
is topologically transitive. This augmented version of the structure theorem follows from
Theorem 2.6.2 in Veech's review paper \cite{V}. It can also be proven by a slight modification 
of E. Glasner's proof of the structure theorem as it appears in Theorem 27, Chapter 14 (page 219) in \cite{Au}.

\br

Let now $(X, G)$ be a minimal metric flow and $\pi : (X, G) \to (Y, G)$ a homomorphism.
Let $\Xi$ denote the Cantor set. We choose a sequence $\mathcal{U}_n$ 
of clopen 
partitions 
of $\Xi$ so that $\mathcal{U}_{n +1}
\prec \mathcal{U}_n$ and so that $\bigcup_{n \in \N} \mathcal{U}_n$
generates the topology on $\Xi$. We let $k_n = \card \mathcal{U}_n$.

Let $R_\pi^\Xi$ be the compact subset of $X^\Xi$ defined by the condition
$$
R_\pi^\Xi = \{r \in X^\Xi : \pi(r(\xi_1)) = \pi(r(\xi_2)), \ \forall \ \xi_1, \xi_2 \in \Xi\}. 
$$
That is, the set of $r$ such that  $\pi \circ r : \Xi \to Y$ is a constant function. 
We denote by $CR_\pi^\Xi$ the subset of $R_\pi^\Xi $ consisting of continuous maps,
and by $UCR_\pi^\Xi$ the topological space whose underlying set is $CR_\pi^\Xi$
but equipped with the topology of uniform convergence; i.e. the topology induced by the
metric
$$
D(r_1, r_2) = \sup_{\xi \in \Xi} d(r_1(\xi), r_2(\xi)). 
$$
It is easy to see that $D$ is a complete metric.
For each $n \in \N$ 
we will identify the space $R_\pi^{k_n}$ with the subspace of $UCR_\pi^\Xi$ consisting of
functions which are constant on each cell of $\mathcal{U}_n$.
Clearly $\bigcup_{n \in \N} R_\pi^{k_n}$ is a dense subset of both spaces
$UCR_\pi^\Xi$ and $R_\pi^\Xi$. In particular 
it follows that $UCR_\pi^\Xi$ is a Polish space.

Let $\iota : UCR_\pi^\Xi  \to CR_\pi^\Xi \subset R_\pi^\Xi$ denote the natural map.
Clearly $\iota$ is a continuous injection. We observe that the image space $CR_\pi^\Xi$ is dense 
in $R_\pi^\Xi$.
Of course all of these spaces are $G$-invariant; however, whereas the spaces
$R_\pi^{k_n}$ and $R_\pi^\Xi$ are indeed flows, in the sense that their phase spaces are compact,
the space $UCR_\pi^\Xi$ is merely a Polish space and thus $(UCR_\pi^\Xi, G)$ is a {\em Polish
dynamical system}. We now have the following crucial proposition whose proof mimics that of
\cite[Proposition 4.10]{A-97}.

\begin{prop}\label{Ethan}
Let $(X, G)$ be a minimal metric flow and $(X, G) \overset{\pi}{\to} (Y,G)$ a homomorphism.
Suppose $\pi$ is a weakly mixing extension of all (finite) orders then 
the (non-metrizable) flow $(R_\pi^\Xi, G)$ is topologically transitive and has a dense set of 
transitive points.
\end{prop}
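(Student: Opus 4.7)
The plan is to exploit the Polish structure of $UCR_\pi^\Xi$ and to lift topological transitivity from $UCR_\pi^\Xi$ to the compact (non-metrizable) space $R_\pi^\Xi$ through the continuous injection $\iota$. Concretely I would proceed in three steps: (a) prove that the Polish $G$-space $(UCR_\pi^\Xi, G)$ is topologically transitive; (b) apply the Baire category theorem in $UCR_\pi^\Xi$ to produce a dense $G_\delta$ of transitive points there; (c) argue that each such transitive point pushes forward under $\iota$ to a transitive point of $R_\pi^\Xi$, which furnishes the dense set required. Only step (a) uses the hypothesis on $\pi$, and it is there that I would feed in, one order at a time, the weak mixing of all orders of the extension.

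For step (a), let $\mathcal{V}_1, \mathcal{V}_2$ be non-empty open subsets of $UCR_\pi^\Xi$; choose $r \in \mathcal{V}_1$, $s \in \mathcal{V}_2$, and $\epsilon > 0$ with $B_D(r, \epsilon) \subset \mathcal{V}_1$ and $B_D(s, \epsilon) \subset \mathcal{V}_2$. Since $r$ and $s$ are continuous on the compact metrizable $\Xi$ they are uniformly continuous, so for $n$ large enough the oscillations of $r$ and $s$ on each cell of $\mathcal{U}_n$ are smaller than $\epsilon/2$. Picking a representative point in each cell of $\mathcal{U}_n$ and replacing $r$ and $s$ by the corresponding step functions produces elements $\tilde r, \tilde s$ constant on each cell of $\mathcal{U}_n$ and satisfying $D(r, \tilde r), D(s, \tilde s) < \epsilon/2$; because $\pi \circ r$ and $\pi \circ s$ are already constant on $\Xi$, so are $\pi \circ \tilde r$ and $\pi \circ \tilde s$, and hence $\tilde r, \tilde s \in R_\pi^{k_n}$. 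The subspace $R_\pi^{k_n}$ inherits from $UCR_\pi^\Xi$ exactly the product topology it carries as a subset of $X^{k_n}$, so $B_D(\tilde r, \epsilon/2) \cap R_\pi^{k_n}$ and $B_D(\tilde s, \epsilon/2) \cap R_\pi^{k_n}$ are non-empty open subsets of that finite-dimensional flow. By the weak mixing of $\pi$ in order $k_n$ the flow $(R_\pi^{k_n}, G)$ is topologically transitive, so some $g \in G$ sends one of these into the other, which in particular witnesses $g \mathcal{V}_1 \cap \mathcal{V}_2 \neq \emptyset$.

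For step (b), each map $r \mapsto gr$ on $UCR_\pi^\Xi$ is continuous, using uniform continuity of the homeomorphism $g$ on the compact metric space $X$; fixing a countable base $\{\mathcal{W}_m\}$ of the Polish space $UCR_\pi^\Xi$, the set of transitive points equals $\bigcap_m \bigcup_{g \in G} g^{-1} \mathcal{W}_m$, each $\bigcup_g g^{-1} \mathcal{W}_m$ is open and, by (a), dense, so the Baire category theorem delivers a dense $G_\delta$ of transitive points. For step (c), let $r$ be such a point. Continuity of $\iota$ combined with density of $Gr$ in $UCR_\pi^\Xi$ forces the orbit closure of $\iota(r)$ in $R_\pi^\Xi$ to contain $\iota(UCR_\pi^\Xi) = CR_\pi^\Xi$, which is already dense in $R_\pi^\Xi$; hence $\iota(r)$ is transitive in $R_\pi^\Xi$, and the image of the $G_\delta$ supplies a dense collection of transitive points there. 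The main technical obstacle will be the passage in step (a) between the uniform topology on $UCR_\pi^\Xi$ and the product topology that $R_\pi^{k_n}$ inherits from $X^{k_n}$; this is handled precisely by exploiting uniform continuity of members of $CR_\pi^\Xi$ together with the refining sequence of clopen partitions $\mathcal{U}_n$ to approximate arbitrary continuous maps by step functions lying in $R_\pi^{k_n}$.
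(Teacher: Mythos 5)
Your proof is correct and follows essentially the same route as the paper's: establish topological transitivity of the Polish system $(UCR_\pi^\Xi, G)$ by reducing open sets to the dense union $\bigcup_n R_\pi^{k_n}$ and invoking weak mixing of order $k_n$, extract a dense $G_\delta$ of transitive points by Baire category, and push these forward under $\iota$ using the density of $CR_\pi^\Xi$ in $R_\pi^\Xi$. The only difference is that you spell out details the paper leaves implicit (the uniform-continuity approximation by step functions and the verification that $\iota$ carries transitive points to transitive points), and these details are handled correctly.
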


\begin{proof}
We will first show that the Polish dynamical system $(UCR_\pi^\Xi, G)$ is topologically
transitive. So let $V_1, V_2$ be two nonempty open subsets of $UCR_\pi^\Xi$.
There are $n_1, n_2 \in \N$ with $V_i \cap R_\pi^{k_{n_i}} \not=\emptyset, \ i=1,2$.
Then, with  $n = \max(n_1, n_2)$ we have $V_i \cap R_\pi^{k_n} \not=\emptyset, \ i=1,2$.
Since, by assumption, the flow $(R_\pi^{k_n}, G)$ is topologically transitive there is some
$g \in G$ with
$$
\emptyset \not = g (V_1 \cap R_\pi^{k_n}) \cap (V_2 \cap R_\pi^{k_n}) 
\subset g V_1  \cap V_2.   
$$
Thus the Polish system $(UCR_\pi^\Xi, G)$ is indeed topologically transitive,
and being Polish, this fact implies that it has a dense $G_\del$ subset of transitive points.
Now under $\iota$ this set is pushed onto a dense set of transitive points in the flow
$(R_\pi^\Xi,G)$ and our proof is complete.
\end{proof}

We are now ready to state and prove the main result of this section, Theorem \ref{2c} below.
Our proof of this theorem follows the main ideas of McMahon's proof in \cite{M}.
The tools that enable us to improve his result are the augmented form of the structure theorem,
Theorem \ref{structure}, and Proposition \ref{Ethan}.

\begin{thm}\label{2c}
Let $(X, G)$ be a minimal metric flow which is not PI. Then
$E(X, G)$ has $2^{\frak{c}}$ minimal left ideals.
\end{thm}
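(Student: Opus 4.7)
The plan is to apply the canonical PI-tower of Theorem~\ref{structure} together with Proposition~\ref{Ethan} to reduce the claim to a lower bound on the number of minimal subsystems of $R_{\pi_\infty}^\Xi$. Since $(X,G)$ is not PI, the structure theorem produces a non-trivial RIC extension $\pi_\infty\colon X_\infty\to Y_\infty$ which is weakly mixing of all orders, together with a proximal extension $\theta\colon X_\infty\to X$. As proximal extensions of minimal flows induce a bijection between the minimal left ideals of the respective enveloping semigroups (the induced continuous homomorphism $\theta_*\colon E(X_\infty,G)\to E(X,G)$ restricts to an isomorphism of $G$-flows on each minimal left ideal), it suffices to prove $|{\rm mi}(X_\infty,G)|\ge 2^{\mathfrak{c}}$.

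I would then apply Proposition~\ref{Ethan} to $\pi_\infty$, obtaining that $(R_{\pi_\infty}^\Xi,G)$ is topologically transitive with a dense set of transitive points. Since $\pi_\infty$ is RIC, minimal points are dense in each finite-dimensional $R_{\pi_\infty}^{k_n}$; each such space is closed and $G$-invariant inside $R_{\pi_\infty}^\Xi$ (so its minimal points remain minimal there), and $\bigcup_n R_{\pi_\infty}^{k_n}$ is dense in $R_{\pi_\infty}^\Xi$, so minimal points are dense in $R_{\pi_\infty}^\Xi$ as well. Fix a transitive point $r_0\in R_{\pi_\infty}^\Xi$, so that $E(X_\infty,G)\cdot r_0 = R_{\pi_\infty}^\Xi$. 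For any minimal left ideal $M\subset E(X_\infty,G)$, the set $M\cdot r_0$ is a minimal subsystem of $R_{\pi_\infty}^\Xi$, and conversely every minimal subsystem of $R_{\pi_\infty}^\Xi$ arises in this way (any $p$ in such a subsystem can be written $p = q r_0$ with $q$ belonging to some minimal left ideal $M\subseteq Eq$). This gives a surjection from minimal left ideals of $E(X_\infty,G)$ onto minimal subsystems of $R_{\pi_\infty}^\Xi$, and the problem is reduced to showing $R_{\pi_\infty}^\Xi$ has at least $2^{\mathfrak{c}}$ minimal subsystems.

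This last reduction is the main obstacle. First, since $\pi_\infty$ is non-trivial and RIC (hence open) and $Y_\infty$ is minimal, every fiber $\pi_\infty^{-1}(y)$ has at least two points, so $|R_{\pi_\infty}^\Xi|\ge |\pi_\infty^{-1}(y_0)^\Xi|= 2^{\mathfrak{c}}$. Following McMahon's strategy in \cite{M} but exploiting the full Cantor-set-indexed structure afforded by Proposition~\ref{Ethan} (in place of the finite-product structure used by McMahon), the plan is to construct, for each subset $A\subseteq\Xi$, a minimal point $r_A\in R_{\pi_\infty}^\Xi$ whose orbit closure is distinct from that of $r_B$ for $B\ne A$. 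The construction would use the density of minimal points to realize each Cantor-indexed pattern (choosing fiber values according to the characteristic function of $A$ and shifting to a nearby minimal point), and the pairwise-distinctness would follow from a genericity argument exploiting weak mixing of all orders: the $G$-action should not be able to collapse distinct $\Xi$-indexed patterns into a single minimal subsystem. The hard part is verifying this pairwise distinctness --- the key structural improvement from $2^{\aleph_1}$ to $2^{\mathfrak{c}}$ is encoded in the jump from finite-order to Cantor-indexed weak mixing, and one must marry the RIC and weak-mixing-of-all-orders properties of $\pi_\infty$ with the combinatorial richness of the Cantor set $\Xi$ to produce $2^{\mathfrak{c}}$ pairwise-distinct orbit closures.
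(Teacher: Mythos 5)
Your reduction steps are sound and match the paper: Proposition \ref{11} (proximality preserves the count of minimal left ideals) lets you replace $X$ by $X_\infty$, and Proposition \ref{Ethan} gives a transitive point $r$ for $(R_{\pi_\infty}^\Xi,G)$. But the proposal stops exactly where the theorem's content begins. You acknowledge that ``the hard part is verifying this pairwise distinctness'' and offer only a hope that ``a genericity argument exploiting weak mixing of all orders'' will separate the $2^{\mathfrak{c}}$ patterns into distinct minimal subsystems. No such genericity argument is needed, and it is not clear how one would be made to work: weak mixing tends to \emph{merge} orbit closures rather than separate them, and the orbit closure of your candidate minimal point $r_A$ could a priori coincide with that of $r_B$ for many $B\neq A$. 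This is a genuine gap, not a routine verification.

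The paper's separation mechanism is algebraic, not generic, and runs as follows. Since $\pi=\pi_\infty$ is nontrivial and RIC it is not proximal, so one can fix a minimal left ideal $I$, a minimal idempotent $u\in I$, and two distinct points $x,x'\in u\pi^{-1}(y)$ where $y$ is the common $\pi$-image of $r(\Xi)$. For each $A\subseteq\Xi$ the pattern $r_A$ (value $x$ on $A$, value $x'$ on $A^c$) lies in the orbit closure of $r$, so $r_A=q_Ar$ for some $q_A\in E(X,G)$; one then passes to the minimal left ideal $I_A=Iq_A$ and, crucially, to the \emph{unique} idempotent $u_A\in I_A$ equivalent to $u$ (Theorem \ref{exe-envel}(3)), writing $u_A=s_Aq_A$ with $s_A\in I_A$. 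The point is that $u_Ar(\xi)=s_Ar_A(\xi)\in\{s_Ax,s_Ax'\}$ still encodes $A$, because $s_Ax\neq s_Ax'$: if they were equal, then $x=u_Ax=u_As_A^{-1}s_Ax=u_As_A^{-1}s_Ax'=u_Ax'=x'$, using the group structure of $u_AI_A$ and the equivalence $u_Au=u$. Hence $A\mapsto I_A$ is injective and one gets $2^{\mathfrak{c}}$ distinct ideals directly, without ever producing $2^{\mathfrak{c}}$ distinct minimal subsystems of $R_\pi^\Xi$ or invoking density of minimal points. To repair your proof you would need to supply an argument of this kind; the structure you set up (the transitive point and the Cantor-indexed patterns) is the right scaffolding, but the injectivity must be extracted from the idempotent calculus in $E(X,G)$, not from topological genericity.
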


\begin{proof}
We start by recalling the following proposition from \cite{M}.

\begin{prop}\label{11}
 If the extension $\pi : (X, G) \to (Y, G)$ is proximal, then $\pi$ induces a one-to-one
 correspondence between the collections of left minimal ideals in  $E(X, G)$ and $E(Y, G)$.
\end{prop} 
 
\begin{proof}
Let $\pi_*$ be the induced semigroup homomorphism from  $E(X, G)$ onto $E(Y, G)$. 
Suppose $I_1, I_2$ are two distinct (hence disjoint) minimal ideals in  $E(X, G)$ 
with $I = \pi_*(I_1) = \pi_*(I_2)$.
Let $u_1 \in I_1$ and $u_2 \in I_2$ be equivalent idempotents, so that $u_1u_2 = u_2$
and $u_2u_1 = u_1$.
Then, as $\pi_*(u_1)$ and $\pi_*(u_2)$ belong to the same minimal left ideal $I$ in $E(Y, G)$,
we have $\pi_*(u_1) = \pi_*(u_2u_1) = \pi_*(u_2)\pi_*(u_1) = \pi_*(u_2)$.
So for every $x \in X$,
$$
\pi(u_1x) = \pi_*(u_1)\pi(x) = \pi_*(u_2) \pi(x) = \pi(u_2x).
$$
Thus the points $u_1x$ and $u_2x$ are proximal.
But, since we have both $u_1(u_1x) = u_1x$ and $u_1(u_2x) = u_2x$,
these points are also distal, whence $u_1x = u_2x$.
This means that $u_1 =u_2 \in I_1 \cap I_2$, a contradiction.
\end{proof}

Now, in the PI diagram constructed for $(X, G)$ in the structure theorem \ref{structure},
the extension $X_\infty \to X$ is a proximal extension. Hence, in view of Proposition \ref{11},
in counting the minimal left ideals we can replace $X$ by $X_\infty$.
So, from now on we assume that $X = X_\infty$ and that the extension
$X = X_\infty \to Y = Y_\infty$ is, nontrivial, RIC and weakly mixing of all orders.

Applying Proposition \ref{Ethan} we conclude that the flow
$(R_\pi^\Xi, G)$ has a dense set of transitive points.
Recall that the enveloping semigroup $E(X, G)$ can be identified with the 
enveloping semigroup $E(R_\pi^\Xi, G)$ (in fact also with $E(X^\Xi, G)$)
by letting $p \in E(X, G)$ act on  $X^\Xi$ coordinaetwise.

Let $r \in R_\pi^\Xi$ be a fixed transitive point. Let $y$ be the unique point in $Y$
such that $r(\Xi) \subset \pi^{-1}(y)$. We also fix a left minimal ideal $I$ in $E(X, G)$.
Next observe that the extension $\pi$, being nontrivial and RIC, can not be proximal.
It follows that there is a minimal idempotent $u \in I$ with $\card{(u\pi^{-1}(y))} >1$.
We fix two distinct points $x$ and $x'$ in $u\pi^{-1}(y)$.

For each nonempty subset $A \subset \Xi$
let $r_A \in R_\pi^\Xi$ be defined by
$$
r_A(\xi) = \begin{cases} x & \xi \in A  \\ 
x' & \xi \in A^c
\end{cases}
$$
Since $r$ is a transitive point there is an element $q_A \in E(X, G)$ with $q_A r =r_A$.
Let $I_A = Iq_A$. This is a minimal left ideal in $E(X, G)$.
Let $u_A$ be the unique minimal idempotent in $I_A$ which is equivalent to $u$.
Then for some element $s_A \in I_A$ we have  $u_A = s_A q_A$.
Suppose $A, A' $ are two distinct nonempty subsets of $\Xi$. 
Then $u_A r (\xi) = s_A q_A r(\xi) = s_A r_A (\xi)$ which equals $s_Ax$ or  $s_Ax'$,
according to whether $\xi \in A$ or $\xi \in A^c$.
Similarly, $u_{A'} r (\xi) = s_{A'} q_{A'} r(\xi) = s_{A'} r_{A'} (\xi)$ which equals $s_{A'}x$ or  $s_{A'}x'$,
according to whether $\xi \in A'$ or $\xi \in A'^c$.
Since $x$ and $x'$ form a pair of distinct
points in $uX$ ($ux =x, \ ux'=x'$)
we conclude that
$s_Ax \not= s_Ax'$ and $s_{A'}x \not = s_{A'}x'$. 
(E.g., if $s_Ax = s_Ax'$ then also
$x = u_A x = u_A s_A^{-1}s_Ax = u_A s_A^{-1}s_Ax' = u_Ax' = x'$, a contradiction.)
It follows that
$u_A r \not = u_A' r$, hence $u_A \not= u_{A'}$.
Thus $I_A \not = I_{A'}$ and the map $A \mapsto I_A$ is one-to-one.
This shows that $E(X, G)$ has at least $2^{\frak{c}}$ minimal ideals. 
By cardinality arguments ${\rm{mi}}(X, G) \le 2^{\frak{c}}$ and the proof of Theorem
\ref{2c} is complete.


\end{proof}

\br
\newpage

\section{A metric minimal PI flow with $\frak{c}$ minimal left ideals}\label{sec2}

Let $X \cong S^1$ and $Y \cong \mathbb{P}^1$ be as in \cite{Gl-75}, 
namely the spaces of rays emanating from the origin, and lines
through the origin of $\R^2$, respectively. Let $G=SL_2(\R)$ and we consider $(X,G)$ and $(Y,G)$
as $G$-flows. Then, the map $\pi : X \to Y$, which sends a ray into the unique line in which it is contained, is
a homomorphism of minimal $G$-flows. It is easy to see that $Y$ is proximal and that the extension $\pi$ 
is a group extension with fiber group $\{\rho, \id\} \cong \Z_2$, where $\rho : X \to X$ is the
map which sends a ray onto its antipodal ray. Thus the $G$-flow $(X,G)$ is in particular a PI-flow.


As was shown in \cite[Example on page 91]{Gl-75}
each minimal left ideal in $E(X,G)$ contains  ${\frak{c}}$ idempotents, called {\em minimal idempotents}.
Any minimal idempotent $u$ corresponds to a partition of the circle $S^1$ (naturally identified with $X$)
into two arcs, say $J_1$ and $J_2$ with common (antipodal) end points $x_1$ and $x_2$, which are
fixed by $u$, and such that $u(J_1) = \{x_1\}, u(J_2) =\{x_2\}$.
For example, the figure below describes two minimal equivalent idempotents 
$u$ and $u'$ (thus belonging to two different minimal left ideals), 
which correspond to the arcs with end points $N, S$:
\br

\begin{figure}[h]
 \centering 
\caption{The minimal idempotents $u$ and $u'$}
\includegraphics[scale=0.6]{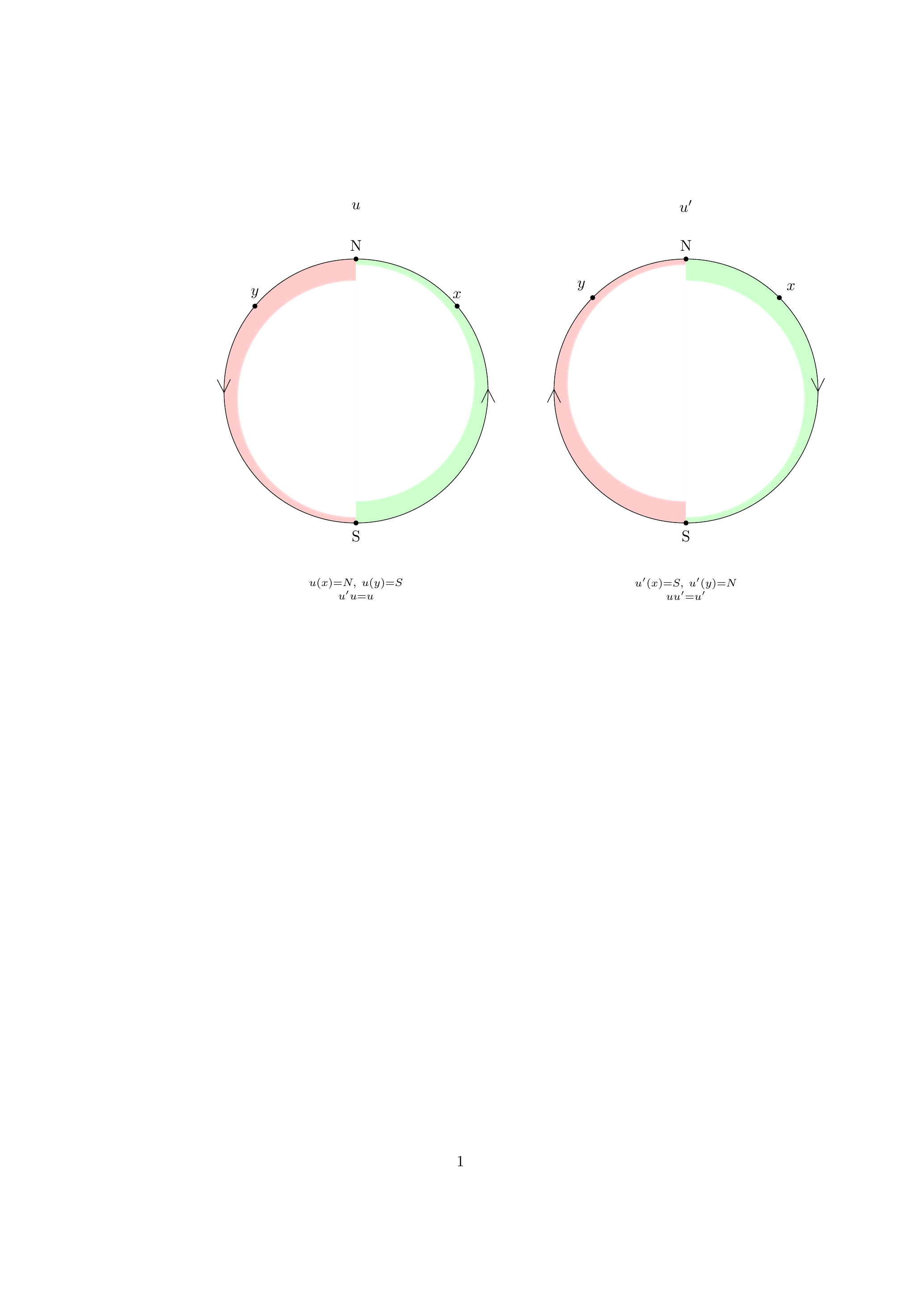}
\label{mi}
\end{figure}
\newpage



%
%
%
%
%
%

In fact, it is not hard to see that $E(X,G)$ contains exactly two minimal left ideals.
For the proof of the following lemma see Appendix \ref{Ap} below.

\begin{lem}\label{idemp}
Let $E = E(X,G)$ be the enveloping semigroup of $(X,G)$. 
For any line $y \in Y$, with corresponding pair of antipodal rays $x_1, x_2$ in $X$,
let $u_y$ and $u'_y$ be the maps from $X$ into itself
which send the corresponding arcs  
onto the endpoints $x_1, x_2$ in the counter clock-wise direction and in the clock-wise direction, respectively.
\begin{enumerate}
\item
$G$ embeds topologically into $E$.
\item
For each $y \in Y$ the maps $u_y, u'_y$ are idempotents in $E$.
\item
The sets $I_+ =\{u_y : y \in Y\}$ and $I_- =\{u'_y : y \in Y\}$ are the two minimal left ideals in $E$.
\item
The set $\{e\} \cup I_+ \cup I_-$ comprises all the idempotents in $E$.
\end{enumerate}
\end{lem}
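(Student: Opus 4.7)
My plan is to treat the four assertions in order, using the $KAK$ decomposition of $G = SL_2(\R)$ and the concrete action of $G$ on rays in $\R^2$.

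For (1), the map $g\mapsto \pi_g$ is continuous (by joint continuity of the action) and injective (the only matrix in $SL_2(\R)$ fixing every ray is $I$; note that $-I$ acts non-trivially, as the antipodal map $\rho$). To upgrade to a topological embedding I would argue, via $KAK$, that any sequence escaping to infinity in $G$ has every subsequential pointwise limit lying outside $\Homeo(X)$, because the diverging $A$-part collapses an open arc onto a single ray. Hence convergence of $\pi_{g_n}$ inside $E$ to an element of $\pi(G)$ forces $(g_n)$ to be bounded in $G$, and then continuity of the inverse follows from injectivity and local compactness.

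For (2), I would exhibit $u_y$ as an explicit pointwise limit of elements of $G$. Choose coordinates so that $y$ is the $x$-axis with $\{x_1,x_2\}=\{e_1,-e_1\}$; set $a_t=\operatorname{diag}(e^t,e^{-t})$ and let $r_\phi$ denote rotation by $\phi$. For suitably chosen sequences $t_n\to\infty$ and $\epsilon_n\to 0^+$, the corrective rotation $r_{-\epsilon_n}$ displaces the ``boundary'' eigendirections $\pm e_2$ of $a_t$ off the eigendirections of the diagonal part and into the appropriate open half-plane, so that $g_n:=a_{t_n}r_{-\epsilon_n}$ converges pointwise to the idempotent sending one arc of $y$ entirely onto $x_1$ and the other entirely onto $x_2$ in the CCW sense. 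This exhibits $u_y$ as a member of $E$; reversing the sign of the correction yields $u'_y$. Idempotency follows immediately from $u_y(X)\subseteq\{x_1,x_2\}$ and $u_y(x_i)=x_i$.

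For (3) and (4), I would first classify all idempotents in $E$ and then read off the minimal left ideal structure. Writing an arbitrary $p\in E$ as a pointwise limit of $g_n=k_n a_{t_n} k'_n$ in $KAK$: if $(a_{t_n})$ stays bounded then $p$ is a homeomorphism of finite order, hence $p=e$; if $(a_{t_n})$ diverges then, after extracting $k_n\to k$ and $k'_n\to k'$, the image of $p$ is contained in the antipodal pair $\{k\cdot e_1,-k\cdot e_1\}$ corresponding to the line $y=k\cdot y_0\in Y$, and $p$ fixes both of these rays. The idempotent identity $p^2=p$, antipode preservation inherited from $G$, and the corrective-rotation construction of step (2) together force the partition underlying $p$ to be exactly the arc decomposition of $X$ determined by $y$, in one of the two orientations, yielding $p=u_y$ or $p=u'_y$. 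This proves (4). Assertion (3) then follows: the continuous assignment $y\mapsto u_y$ makes $I_+$ a compact subset of $E$, and by Theorem~\ref{exe-envel}(1) together with (4), the minimal left ideals of $E$ are exactly the orbit closures of minimal idempotents; the two orientation classes $I_+$ and $I_-$ give rise to exactly two distinct such closures.

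The main obstacle is the classification step in (4): one must rule out ``intermediate'' idempotents whose image could contain the four eigen-rays of a limiting diagonal element rather than a single antipodal pair. The rotation-corrected $KAK$ construction of step (2) is what resolves this --- it shows that any apparent four-point limit can be refined inside $E$ to a proper two-point idempotent, so that the final list of idempotents is exactly $\{e\}\cup I_+\cup I_-$.
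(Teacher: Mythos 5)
Your parts (1) and (2) are essentially sound and run parallel to the paper's argument, with one cosmetic difference: where you use the $KAK$ decomposition to show that unbounded sequences degenerate, the paper invokes Furstenberg's lemma on quasi-projective limits of sequences in $SL(V)$ with $\|\tau_n\|\to\infty$, first on $Y=\mathbb{P}^1$ and then lifts to the circle of rays; your explicit $a_{t_n}r_{-\epsilon_n}$ construction for (2) is in fact more detailed than the paper's ``this is easily checked''.

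The classification step for (3)--(4) is where your argument has a genuine gap, and the ``rotation-corrected refinement'' does not close it. Take $g_n=a_{t_n}$ with $t_n\to\infty$ and no corrective rotation. The pointwise limit $p$ fixes the rays $\pm e_2$, sends the open right half-circle to $e_1$ and the open left half-circle to $-e_1$; its image is the four-point set $\{\pm e_1,\pm e_2\}$, and one checks directly that $p\circ p=p$. So $p$ is an idempotent of $E$ which is neither $e$ nor any $u_y$ or $u'_y$. The observation that $p$ can be ``refined'' to a two-point map by perturbing the approximating sequence produces \emph{another} element of $E$; it does not expel $p$ itself from $E$. Hence your claim that a divergent sequence always has image contained in a single antipodal pair is false, and (4) cannot be obtained this way. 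A second, independent problem occurs in (3): for the orbit-closure argument you must verify that $q\circ u_y\in I_+$ for every $q\in E$, but already for $g\in G$ the map $g\circ u_y$ collapses the two arcs determined by $y$ onto the pair $\{gx_1,gx_2\}$, which are not the endpoints of those arcs unless $g$ preserves the line $y$; consequently the orbit closure of $u_y$ consists of the maps with the \emph{fixed} arc decomposition of $y$ and a varying antipodal image pair, which is not the set $I_+$. You should know that the paper's own appendix proof is equally terse at exactly these two points: it asserts that every idempotent of $E(Y,PSL_2(\R))$ is either the identity or a constant map (overlooking the limit of $\bar a_{t_n}$, which fixes $\bar e_2$ and is a non-constant idempotent), and that each constant has ``exactly two'' idempotent liftings, without pinning down the singular antipodal pair of the lifting. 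So the difficulty you correctly identified as ``the main obstacle'' is real, and neither your sketch nor the argument you were asked to reproduce resolves it; any correct treatment must either restrict attention to idempotents with prescribed singular pair or restate (3)--(4) accordingly.
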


Next we will describe a construction of a flow $(\Om, \bf{G})$, for a suitable
group $\bf{G}$, which is metric, minimal, PI, and its enveloping semigroup has ${\frak{c}}$
minimal left ideals.

We let $\Om = X^\N$, where $X$ is as above. The group $\bf{G}$ is
the product group ${\bf{G}} = G^\N = SL_2(\R)^\N$.
The action of $\bf{G}$ on $\Om$ is coordinatewise.
For each $n \in \N$ the projection $P_n : \Om \to X$ is a homomorphism of the flow $(\Om, {\bf{G}})$
onto the flow $(X, G)$.

As $\bf{G}$ acts coordinatewise, we can achieve, for every sequence $\theta \in \{1, -1\}^\N$,
a minimal idempotent ${\bf{u}}_\theta   \in E(\Om, {\bf{G}})$ such that $P_n({\bf{u}}_\theta)$ is either $u$
or $u'$, as in the example above, according to whether $\theta(n) = 1$ or $-1$.   

Ranging over the elements $\theta \in  \{1, -1\}^\N$ we obtain this way a collection of
${\frak{c}}$ pairwise equivalent distinct minimal idempotents, which in turn,
belong to  ${\frak{c}}$ distinct minimal left ideals.

Finally, it is not hard to see that $(\Om, {\bf{G}})$ is minimal and PI. In fact,
the natural map $\Pi  = (\pi \circ P_1, \pi \circ P_2, \dots ): \Om \to \Om_0 = Y^\N$ is a group
extension, with compact fiber group $K \cong \Z_2^\N$, and the factor
flow $(\Om_0, \bf{G})$ is proximal.

\br

\section{A cascade with $\frak{c}$ minimal left ideals}\label{cascadec}
We will next apply the construction of skew-product minimal flows from
\cite{GW-79} and the above example
to obtain a minimal PI cascade with 
${\frak{c}}$  minimal left ideals.

In the sequel we use the notations of \cite{GW-79}.
In particular $(Z, \sig)$ is a fixed minimal metric cascade and 
$(Y, \mathcal{G})$ is a minimal $\mathcal{G}$-flow with $\mathcal{G}$
a path-wise connected topological group.
In addition, we assume that $\pi : (Y, \mathcal{G}) \to (Y_0, \mathcal{G})$ is a 
group extension with compact fiber group $K$.
That is, there is a compact subgroup $K \subset \Homeo(Y)$,
the group of homeomorphisms of the compact metric space $Y$,
such that $gky = kgy$ for every $y \in Y, g\in \mathcal{G}$ and $k \in K$,
and such that  $\pi(y_1) = \pi(y_2)$ iff there is a $k \in K$ with $y_2 = ky_1$.
Note that any homeomorphism $h$ of $Y$ which commutes with $K$
defines a unique homeomorphism $\tilde{h}$ of $Y_0$.

As in \cite{GW-79}, on the product space $X = Z \times Y$,
we let $\mathcal{H}_\mathcal{G}(X)$ be the subgroup of $\Homeo(X)$
which consists of homeomorphisms $g : X \to X$
of the form $g(z, y) = (z, g_z(y))$,
where $z \mapsto g_z \in \mathcal{G}$ is a continuous map from $Z$ into $\mathcal{G}$.
We then let 
$$
\mathcal{S}_\mathcal{G}(\sig) =
\{g^{-1} \circ \sig \circ g : g \in \mathcal{H}_\mathcal{G}(X)\}
$$
(here $\sig$ is identified with $\sig \times \id$, where $\id$ is the identity map on $Y$,
so that $g(z,y) = (\sig z, g_{\sig z}^{-1}g_z(y))$).
We set $X_0 = Z \times Y_0$ and define the map $\pi_X : X \to X_0$
as $\pi_X(z,y) = (z, \pi(y))$. 

We note that every homeomorphism $T \in  \ol{\mathcal{S}_\mathcal{G}(\sig)}$
(where the closure is taken in the Polish group $\mathcal{H}(X)$ equipped with
its uniform convergence topology) commutes with the $K$-action on the $Y$ coordinate of $X$. 
Thus, every such $T$ defines also a corresponding homeomorphism $\tilde{T}$ in
$\mathcal{H}(X_0)$. The proof of the following lemma is straightforward and we skip it.

\br

\begin{lem}\label{skew}
For every $T  \in  \ol{\mathcal{S}_\mathcal{G}(\sig)}$, each element $p$ in the enveloping semigroup 
$E(X,T)$ has the form 
$$
p(z,y)  = (\pi_*(p) z, p_zy),
$$
where $\pi_*(p)$ is the image of $p$ in $E(Z,\sig)$ and $p_z$ is an element of $E(Y,\mathcal{G})$.
Moreover $p$ is an idempotent iff $\pi_*(p)$ and all the $p_z, \ z \in Z$, are idempotents. 
\end{lem}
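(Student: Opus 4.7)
The plan is to unwind the definitions directly. For $g(z,y) = (z, g_z y) \in \mathcal{H}_\mathcal{G}(X)$, a one-line calculation gives
\[
g^{-1} \circ \sig \circ g(z,y) \;=\; g^{-1}(\sig z,\, g_z y) \;=\; \bigl(\sig z,\ g_{\sig z}^{-1} g_z(y)\bigr),
\]
so every element of $\mathcal{S}_\mathcal{G}(\sig)$ has the skew-product form $(z,y) \mapsto (\sig z, \tau_z y)$ with $z \mapsto \tau_z$ a continuous map from $Z$ into $\mathcal{G}$. Since $\sig$ acts trivially on the $Y$-coordinate, this form is preserved under uniform limits in $\mathcal{H}(X)$, so every $T \in \overline{\mathcal{S}_\mathcal{G}(\sig)}$ still satisfies $T(z,y) = (\sig z, \tau_z y)$ with $\tau_z$ in the uniform closure of $\mathcal{G}$ inside $\Homeo(Y)$ and $z \mapsto \tau_z$ continuous.

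A routine induction then yields
\[
T^n(z,y) \;=\; \bigl(\sig^n z,\ \tau_{\sig^{n-1}z} \circ \tau_{\sig^{n-2}z} \circ \cdots \circ \tau_z(y)\bigr).
\]
Any $p \in E(X,T)$ is a pointwise limit in $X^X$ of some net $(T^{n_i})$. Projecting to the first coordinate produces $z \mapsto \lim \sig^{n_i}z$, which is precisely the image $\pi_*(p) \in E(Z,\sig)$ under the induced semigroup homomorphism coming from the factor map $X \to Z$. For each fixed $z$, every finite composition $\tau_{\sig^{n_i-1}z} \circ \cdots \circ \tau_z$ is a product of elements of (the uniform closure of) $\mathcal{G}$ and therefore lies in the semigroup $E(Y,\mathcal{G})$; since $E(Y,\mathcal{G})$ is closed under pointwise $Y^Y$-limits, the limit $p_z$ also lies in $E(Y,\mathcal{G})$. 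This gives the asserted form $p(z,y) = (\pi_*(p)z,\, p_z y)$.

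For the idempotent statement a direct computation yields
\[
p \circ p(z,y) \;=\; \bigl(\pi_*(p)^2 z,\ p_{\pi_*(p)z} \circ p_z(y)\bigr),
\]
so $p^2 = p$ is equivalent to the conjunction of $\pi_*(p)^2 = \pi_*(p)$ and the cocycle identity $p_{\pi_*(p)z} \circ p_z = p_z$ for every $z \in Z$. The main (and essentially only) subtlety is reconciling this cocycle identity with the asserted fiberwise idempotency: at any $z$ in the (nonempty) fixed-point set of $\pi_*(p)$ the identity specialises immediately to $p_z^2 = p_z$, and one then exploits the minimality of $(Z,\sig)$ together with the cocycle identity to propagate idempotency to all $z \in Z$. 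The converse is obtained by running the same computation backwards. I expect this final step — extending pointwise idempotency from the fixed-point set of $\pi_*(p)$ to all of $Z$ — to be the only place where some care is required.
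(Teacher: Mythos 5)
The paper itself gives no proof of this lemma (it is declared straightforward and skipped), so your proposal can only be judged on its own terms. The first half — the skew-product form $T(z,y)=(\sigma z,\tau_z y)$, its persistence under uniform limits, the formula for $T^n$, and the conclusion that every $p\in E(X,T)$ has the form $p(z,y)=(\pi_*(p)z,p_zy)$ with $p_z\in E(Y,\mathcal{G})$ — is correct and complete, and is surely the argument the authors had in mind.

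The gap is in the ``moreover'' clause, and you have located it precisely without closing it. Your computation shows that $p^2=p$ is equivalent to $\pi_*(p)^2=\pi_*(p)$ together with the cocycle identity $p_{\pi_*(p)z}\circ p_z=p_z$ for all $z$. This is genuinely weaker than fiberwise idempotency: for $z$ outside the fixed-point set of $\pi_*(p)$ it only says that the idempotent $p_{\pi_*(p)z}$ acts as the identity on the range of $p_z$, and $u\circ q=q$ with $u^2=u$ does not imply $q^2=q$. The ``propagation by minimality'' you defer to has no visible mechanism: the cocycle identity relates $p_z$ to $p_{\pi_*(p)z}$, not to $p_{\sigma z}$, so minimality of $(Z,\sigma)$ never moves you off the fixed-point set of $\pi_*(p)$; and the same objection defeats ``running the computation backwards'' for the converse, since idempotency of $\pi_*(p)$ and of every $p_z$ does not yield $p_{\pi_*(p)z}\circ p_z=p_z$. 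I see no route to either direction for a general minimal $(Z,\sigma)$, and I suspect the clause is simply misstated at that level of generality. What rescues it in this paper is that in every application $(Z,\sigma)$ is equicontinuous, so $E(Z,\sigma)$ is a group whose unique idempotent is $\mathrm{id}_Z$; then $\pi_*(p)=\mathrm{id}_Z$ for any idempotent $p$, every $z$ lies in the fixed-point set, and your cocycle identity literally becomes $p_z^2=p_z$, in both directions. You should either add that hypothesis (in which case your computation already finishes the proof) or replace the fiberwise-idempotency condition by the cocycle identity $p_{\pi_*(p)z}\circ p_z=p_z$, which is what your argument actually establishes and is all the paper ever uses.
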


We will assume that the action of $\mathcal{G}$ on $Y_0$ has the following property $(*)$:
\begin{quote}
For every pair of points $y_1, y_2$ in $Y_0$ there exist neighborhoods $U$ and $V$
of $y_1$ and $y_2$ respectively, such that for every 
$\ep >0$ there exists $h \in \mathcal{G}$ with $\diam (h (V \cup U)) < \ep$.
\end{quote}

\br

\begin{prop}\label{prop}\label{residual}
Under the above assumptions
there exists a dense $G_\del$ subset $\mathcal{R} \subset \ol{\mathcal{S}_\mathcal{G}(\sig)}$
such that for every $T \in \mathcal{R}$ we have:
\begin{enumerate}
\item
The cascade $(X, T)$ is minimal.
\item
The extension $\pi_X : (X, T)  \to (X_0,\tilde{T})$ is a $K$-extension.
\item
The projection $P :  (X_0, \tilde{T}) \to (Z, \sig)$ is a proximal extension.
\end{enumerate}
In particular, for $T \in \mathcal{R}$ the minimal  cascade $(X,T)$ is PI.
\end{prop}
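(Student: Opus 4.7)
The plan is to apply the Baire category theorem in the Polish space $\ol{\mathcal{S}_\mathcal{G}(\sig)}$, which is a closed subspace of the Polish group $\Homeo(X)$ under the uniform convergence topology. I will define $G_\delta$ subsets $\mathcal{R}_1$ and $\mathcal{R}_3$ corresponding to conditions (1) and (3), observe that (2) holds automatically for \emph{every} $T \in \ol{\mathcal{S}_\mathcal{G}(\sig)}$ (granted minimality from (1)), and take $\mathcal{R} = \mathcal{R}_1 \cap \mathcal{R}_3$.

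For the $G_\delta$ descriptions, fix a countable basis $\{U_n\}$ of $X$ and a countable dense subset $D$ of the fibered product $R_P = \{(x,x') \in X_0 \times X_0 : P(x) = P(x')\}$. Minimality of $(X,T)$ is equivalent to the statement that for every $n$ there is an $N \in \N$ with $\bigcup_{|k| \le N} T^{-k}(U_n) = X$, and for fixed $n,N$ this is an open condition in $T$ by compactness of $X$; hence
\[
\mathcal{R}_1 = \bigcap_n \bigcup_N \left\{T : \bigcup_{|k| \le N} T^{-k}(U_n) = X\right\}
\]
is $G_\delta$. Proximality of $P$ amounts to saying that every pair $(x,x') \in D$ is proximal in $(X_0, \tilde T)$, which yields
\[
\mathcal{R}_3 = \bigcap_{(x,x') \in D} \bigcap_{m \in \N} \bigcup_{k \in \Z} \left\{T : d(\tilde T^k x, \tilde T^k x') < 1/m\right\},
\]
again a $G_\delta$. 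For (2), every element of $\mathcal{H}_\mathcal{G}(X)$ commutes with the $K$-action $(z,y) \mapsto (z,ky)$ since $\mathcal{G}$ commutes with $K$ on $Y$ by hypothesis; this commutation passes to uniform limits, so $K$ acts by automorphisms of $(X,T)$ for every $T \in \ol{\mathcal{S}_\mathcal{G}(\sig)}$, and the fibers of $\pi_X$ are precisely the $K$-orbits. Combined with minimality of $(X,T)$ (which then forces $(X_0,\tilde T)$ to be minimal), this shows $\pi_X$ is a $K$-extension.

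The main obstacle is density of $\mathcal{R}_1$ and $\mathcal{R}_3$, which is the technical heart of \cite{GW-79}. Given any $T_0 \in \ol{\mathcal{S}_\mathcal{G}(\sig)}$ and any neighborhood, one constructs a nearby $T = g^{-1} \circ \sig \circ g$ with $g \in \mathcal{H}_\mathcal{G}(X)$ close to the element defining $T_0$. The path-wise connectedness of $\mathcal{G}$ permits one to continuously interpolate the fiber maps $z \mapsto g_z$ and thus localize a perturbation to a small portion of $Z$, keeping $T$ near $T_0$. To force every orbit of $T$ to meet a given $U_n$ (density of $\mathcal{R}_1$), one chains fiber-wise transformations using minimality of $(Y,\mathcal{G})$. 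To bring a given pair $(x,x')$ within distance $1/m$ at some time $k$ (density of $\mathcal{R}_3$), one invokes property $(*)$: choose neighborhoods $U \ni y_1$, $V \ni y_2$ and $h \in \mathcal{G}$ with $\diam(h(U \cup V)) < 1/m$, and arrange the perturbation so that at some time $k$ along the $\sig$-orbit the accumulated fiber transformation equals $h$ up to the allowed slack.

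Finally, since $K$ is compact, $\pi_X$ is an isometric extension, and $P$ is proximal by (3). The tower $(X,T) \overset{\pi_X}{\to} (X_0,\tilde T) \overset{P}{\to} (Z,\sig)$ therefore realizes $(X,T)$ as an isometric extension of a proximal extension of $(Z,\sig)$, so whenever $(Z,\sig)$ is itself PI --- as will be the case in the application in Section \ref{cascadec} --- the cascade $(X,T)$ is PI, completing the proof.
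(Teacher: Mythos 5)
Your overall strategy -- Baire category in $\ol{\mathcal{S}_\mathcal{G}(\sig)}$, with (1) and (3) each cut out by a countable intersection of open dense sets and (2) holding automatically because every $T\in\ol{\mathcal{S}_\mathcal{G}(\sig)}$ commutes with the $K$-action -- is exactly the route the paper takes; the paper simply quotes Theorem 1 of \cite{GW-79} for (1) and adapts the proof of Theorem 3 of \cite{GW-79} for (3). Your treatment of (1), of (2), and your closing caveat that the PI conclusion needs $(Z,\sig)$ itself to be PI (it is equicontinuous in every application) are all fine.

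There is, however, a genuine gap in your formulation of $\mathcal{R}_3$. You define it by requiring that every pair $(x,x')$ in a countable \emph{dense} subset $D$ of $R_P$ be proximal. Membership in that $G_\del$ only guarantees proximality of the pairs in $D$: the proximal relation is a $G_\del$ in $R_P$ but is not closed, and density of proximal pairs does not imply that all pairs in $R_P$ are proximal (minimal weakly mixing non-proximal extensions already have a dense proximal relation). So a $T\in\mathcal{R}_1\cap\mathcal{R}_3$ as you have defined it need not make $P$ a proximal extension. The fix is the one used in \cite{GW-79} and echoed in the paper: index the open conditions not by pairs of points but by pairs $U,V$ of basic open sets, setting $E_{U,V,\ep}=\{T:\exists k\ \text{with}\ \diam(\tilde T^k(\{z\}\times(U\cup V)))<\ep\ \text{for all}\ z\in Z\}$ (a condition that is open in $T$ and, ranging over a countable basis and $\ep=1/m$, forces \emph{every} fiber pair to be proximal). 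The paper's only added twist, which you partially anticipate, is that since property $(*)$ is a hypothesis on the $\mathcal{G}$-action on $Y_0$ rather than on $Y$, one must take $U,V$ to be $K$-invariant and measure diameters in the pseudometric lifted from $X_0$ through $\pi_X$; proximality can only be produced modulo the isometric $K$-fibers. Your density argument should then be run for these sets $E_{U,V,\ep}$ rather than for your pointwise conditions.
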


\begin{proof}
Part (1) is just Theorem 1 in \cite{GW-79}.
Part (2) is clear. For part (3) apply the proof of Theorem 3 in \cite{GW-79} with the following modification.
In the definition of the sets $E_{U, V, \ep}$ we consider only nonempty open subsets
$U$ and $V$ of $Y$ that are $K$-invariant, and the diameter is taken with respect to the
pseudometric on $X$ which is obtained by lifting the $X_0$ metric via $\pi_X$.
Thus for this kind of open sets and with respect to this pseudometric, the
conditions required in the proof of Theorem 3 in \cite{GW-79}
are satisfied by our assumption on the $\mathcal{G}$-action on $Y_0$.
\end{proof}

\br

\begin{thm}\label{cas}
There exists a metric minimal PI cascade (of order $3$) whose enveloping semigroup has 
${\frak{c}}$ minimal left ideals.
\end{thm}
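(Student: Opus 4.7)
The plan is to apply the skew-product framework of Section \ref{cascadec} to the metric minimal PI flow $(\Omega,\mathbf{G})$ constructed in Section \ref{sec2}. I set $(Y,\mathcal{G})=(\Omega,\mathbf{G})$, take $\pi:\Omega\to\Omega_0=(\mathbb{P}^1)^\N$ to be the group extension with compact fiber $K\cong\Z_2^\N$, and choose $(Z,\sigma)$ to be an equicontinuous minimal metric cascade such as an irrational rotation of the circle; the equicontinuity of $(Z,\sigma)$ is what pins down the PI order of the resulting cascade at exactly $3$.

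Before invoking Proposition \ref{residual} I verify its hypotheses. The group $\mathbf{G}=SL_2(\R)^\N$ is path-connected as a product of path-connected groups. For property $(*)$ on the $\mathbf{G}$-action on $\Omega_0$: given $y_1,y_2\in\Omega_0$, pick any two small basic product neighborhoods $U$ and $V$; for $\varepsilon>0$, take $N$ large enough that the tail of the product metric contributes less than $\varepsilon/2$, and use the (strong) proximality of the $SL_2(\R)$-action on $\mathbb{P}^1$ coordinatewise on the first $N$ factors to produce an element $h=(h_1,\ldots,h_N,e,e,\ldots)\in\mathbf{G}$ with $\mathrm{diam}(h(U\cup V))<\varepsilon$. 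Proposition \ref{residual} then yields a residual $\mathcal{R}\subset\overline{\mathcal{S}_{\mathbf{G}}(\sigma)}$ such that for every $T\in\mathcal{R}$ the cascade $(X,T)$ on $X=Z\times\Omega$ is a minimal metric cascade fitting into the PI tower $\{\mathrm{pt}\}\leftarrow(Z,\sigma)\leftarrow(X_0,\tilde T)\leftarrow(X,T)$ whose three extensions are respectively equicontinuous, proximal, and a compact $K$-group extension; in particular $(X,T)$ is strictly PI of order $3$.

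It remains to show $\mathrm{mi}(X,T)=\mathfrak{c}$. The upper bound $\mathrm{mi}(X,T)\le\mathfrak{c}$ is automatic from the metrizability of $X$. For the lower bound, Lemma \ref{skew} decomposes every $p\in E(X,T)$ as $p(z,y)=(\pi_*(p)z,p_zy)$ with $p_z\in E(\Omega,\mathbf{G})$, and a direct computation gives the composition rule $(pq)_z=p_{\pi_*(q)z}\circ q_z$. My plan is to lift each of the $\mathfrak{c}$ pairwise distinct minimal idempotents $\mathbf{u}_\theta\in E(\Omega,\mathbf{G})$, $\theta\in\{\pm1\}^\N$, from Section \ref{sec2} to a minimal idempotent $\tilde{\mathbf{u}}_\theta\in E(X,T)$ satisfying $\pi_*(\tilde{\mathbf{u}}_\theta)z_0=z_0$ and $(\tilde{\mathbf{u}}_\theta)_{z_0}=\mathbf{u}_\theta$ at some fixed $z_0\in Z$. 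If two distinct such lifts $\tilde{\mathbf{u}}_\theta$ and $\tilde{\mathbf{u}}_{\theta'}$ were to lie in a common minimal left ideal of $E(X,T)$, then Theorem \ref{exe-envel}(2)(a) would force $\tilde{\mathbf{u}}_\theta\tilde{\mathbf{u}}_{\theta'}=\tilde{\mathbf{u}}_\theta$; taking $z_0$-fibers would yield $\mathbf{u}_\theta\mathbf{u}_{\theta'}=\mathbf{u}_\theta$, contradicting the fact (proved in Section \ref{sec2}) that $\mathbf{u}_\theta$ and $\mathbf{u}_{\theta'}$ lie in distinct minimal left ideals of $E(\Omega,\mathbf{G})$.

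The main obstacle is producing the lifts $\tilde{\mathbf{u}}_\theta$, since the fiber map $p\mapsto p_z$ is not a semigroup homomorphism and ideals do not pull back directly. I would overcome this by exploiting the richness guaranteed by the residual choice $T\in\mathcal{R}$: the minimality and cocycle arguments of \cite{GW-79} show that for $T\in\mathcal{R}$ and any $z_0\in Z$, the set $\{p_{z_0}:p\in E(X,T),\ \pi_*(p)z_0=z_0\}$ already contains all of $E(\Omega,\mathbf{G})$, so some $p^{(\theta)}\in E(X,T)$ realizes $p^{(\theta)}_{z_0}=\mathbf{u}_\theta$; an Ellis-type compactness argument in the closed subsemigroup generated by $p^{(\theta)}$ inside the minimal left ideal $E(X,T)\,p^{(\theta)}$ then extracts the required minimal idempotent $\tilde{\mathbf{u}}_\theta$ with $(\tilde{\mathbf{u}}_\theta)_{z_0}=\mathbf{u}_\theta\circ\mathbf{u}_\theta=\mathbf{u}_\theta$.
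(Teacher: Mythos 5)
Your setup --- the skew product $X=Z\times\Omega$ over an irrational rotation, the verification of property $(*)$, the appeal to Proposition \ref{residual} for minimality and the order-$3$ PI tower, and the intended use of the idempotents $\mathbf{u}_\theta$ together with the observation that two equivalent idempotents in a common minimal left ideal would force $\mathbf{u}_\theta\mathbf{u}_{\theta'}=\mathbf{u}_\theta$ on the fiber over $z_0$ --- all matches the paper. The gap is in producing the lifts $\tilde{\mathbf{u}}_\theta$. You assert that for $T\in\mathcal{R}$ the fiber semigroup $\{p_{z_0}:p\in E(X,T),\ \pi_*(p)z_0=z_0\}$ contains all of $E(\Omega,\mathbf{G})$, citing ``the minimality and cocycle arguments of \cite{GW-79}''. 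Theorem 1 of \cite{GW-79} gives minimality of $(X,T)$, which only yields that this fiber semigroup acts transitively on $\Omega$ (for any $\omega,\omega'$ some $p_{z_0}$ carries $\omega$ to $\omega'$); it says nothing about which elements of $E(\Omega,\mathbf{G})$ --- a semigroup generated by the whole group $SL_2(\R)^\N$ --- actually arise as limits of the cocycle products of the single map $T$. Neither Proposition \ref{residual} nor anything else you invoke guarantees this. (A claim of this flavor can be extracted from Dirb\'{a}k's theorem, Theorem \ref{Dir}, which is how Section \ref{final} deals with the analogous difficulty for the $2^{\frak{c}}$ example, but you do not invoke it and the paper does not need it here.) A secondary problem: even granting some $p^{(\theta)}$ with $p^{(\theta)}_{z_0}=\mathbf{u}_\theta$, the Ellis--Namakura idempotent in the closed subsemigroup generated by $p^{(\theta)}$ need not be a \emph{minimal} idempotent of $E(X,T)$, whereas your separation argument requires each lift to lie in some minimal left ideal.

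The paper avoids both issues by arguing in the opposite direction. It starts from an \emph{arbitrary} minimal idempotent $u\in E(X,T)$ (which exists by Ellis' theorem): equicontinuity of $(Z,\sigma)$ forces $\pi_*(u)=\id$, Lemma \ref{skew} produces an idempotent $u_{z_0}\in E(\Omega,\mathbf{G})$, and the classification of idempotents in Lemma \ref{idemp} (parts (1) and (4)) forces each coordinate $P_n(u_{z_0})$ to be some $u_y$ or $u'_y$, so $u_{z_0}$ is automatically of type $\theta$ for some $\theta$. The compact group $K\cong\{\rho,\id\}^\N$ of the group extension $\pi_X$, which commutes with $T$, is then used to move this one idempotent around: since $\rho u_y=u'_y$, the $K$-orbit of $u$ restricted to the fiber over $z_0$ realizes every type $\theta\in\{1,-1\}^\N$, yielding $\frak{c}$ pairwise equivalent, pairwise distinct minimal idempotents and hence $\frak{c}$ minimal left ideals. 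This commuting $K$-action is exactly the ingredient your proposal lacks; it substitutes for the unproved surjectivity claim, and without it your argument does not close.
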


\begin{proof}
We are going to apply Proposition \ref{prop} with the following input.
$(Z,\sig)$ is a fixed metric minimal equicontinuous cascade (e.g. one can take $(Z,\sig)$ to be
an irrational rotation on the unit circle in $\C$).
The role of $Y$ and $Y_0$ will be played by $\Om$ and $\Om_0$ respectively, 
with $\mathcal{G} = {\bf{G}} = SL_2(\R)^\N$,
as described in Section \ref{sec2}.
Clearly property $(*)$ holds for the action of ${\bf{G}}$ on $\Om_0$.
Therefore, Proposition \ref{prop} applies and we pick any $T$ in the residual set 
$\mathcal{R}$ provided by this proposition. We also fix an arbitrary point $z_0 \in Z$.

Let now $u$ be any minimal idempotent in $E(X,T)$. 
By the equicontinuity of $(Z, \sig)$,  $u$ projects to $Z$ as the identity.
By Lemma \ref{skew},
for any $\om \in \Om$, $u(z_0, \om) = (uz_0, u_{z_0}\om) = (z_0, u_{z_0}\om)$, where $u_{z_0}$
is an idempotent in $E(\Om, {\bf{G}})$.
Next observe that an element $p$ of $E(\Om, \mathcal{G})$ is completely determined by its
projections $P_n(p), \ n \in \N$, in $E(S^1, SL_2(\R))$. Of course for each $n \in \N$,
$P_n(u_{z_0})$ is an idempotent in $E(X, G)$ hence, by Lemma \ref{idemp} (4),
either $P_n(u_{z_0})= u_{P_n(\om)}$, or  $P_n(u_{z_0})= u'_{P_n(\om)}$
(by Lemma \ref{idemp} (1),  $P_n(u_{z_0})\not = e$).
One more observation: the reflection $\rho$ (which sends a ray onto its antipodal ray)
satisfies $\rho(u_{z_0}) = u'_{z_0}$ and it follows that under the action of $K \cong \{\rho, \id\}^\N$
the orbit $Ku$, restricted to the fiber over $z_0$, namely $Ku_{z_0}$
exhausts all the possible  ${\bf{u}}_\theta, \ \theta \in \{1, -1\}^\N$.

We finally conclude, as in the discussion in Section \ref{sec2}, that $E(X,T)$ indeed contains
${\frak{c}}$ minimal left ideals.
\end{proof}

\begin{rmk}\label{2^n}
For any $n \in \N$ the same method,
where in Section \ref{sec2} we set $\Om = X^n$ instead of $X ^{\N}$,
will produce, a minimal cascade with $2^n$ minimal left ideals. 
\end{rmk}

\br

\section{A PI-flow on the Cantor set with $2^{{\frak{c}}}$ minimal left ideals}\label{cantor}

Let $Z$ denote the Cantor set. Let $G_0 \subset \Homeo(Z)$ be a finitely generated
subgroup such that the corresponding flow $(Z, G_0)$ is {\em totally minimal}
in the sense that for every two ordered finite subsets of distinct points, $\{z_1,z_2, \dots, z_n\}$ and
$\{w_1,w_2, \dots, w_n\}$ of $X$, there is a sequence of elements $\ga_j \in G_0$ with 
$\lim_{j\to \infty} \ga_j(z_i) = w_i$, for $i=1,2,\dots,n$. 
\footnote{Actually a less stringent condition will suffice:
 for every two ordered finite subsets of distinct points, $\{z_1,z_2, \dots, z_n\}$ and
$\{w_1,w_2, \dots, w_n\}$ of $X$ and every pair of disjoint nonempty open sets
$V_1, V_2 \subset X$ there is an element $g \in G_0$ with
$\ga z_i \in V_1$ and $\g w_i \in V_2$ $\forall \ 1 \le i \le n$.}

Clearly then the flow $(Z, G_0)$ is,
in particular, minimal and proximal.
We note that the existence of $G_0$ as above follows from a result of Kechris and Rosendal
\cite[Theorem 2.10]{KR}, according to which the Polish group $\Homeo(Z)$ is topologically $2$-generated; 
i.e. it contains a dense subgroup which is generated by two elements. 

Let $\Z_2 = \{1,-1\}$ and set $X = Z \times \Z_2$. Choose a fixed nonempty clopen proper subset $U$ of $Z$.
Let $\phi : Z \to \Z_2$ be the function 
$$ 
\phi(z) = \begin{cases} -1 & z \in U\\  1 & z \in U^c  \end{cases}
$$
and define a homeomorphism $\Phi$ of $X$ by the formula
$$
\Phi(z,\ep) = (z, \phi(z)\ep).
$$
By abuse of notation, given $\ga \in G_0$, we also denote by the same letter $\ga$ the homeomorphism 
$\ga \times \id$ of $X$. Thus $\ga (z, \ep) = (\ga z, \ep)$.
Finally let $G$ denote the subgroup of $\Homeo(X)$ generated by $G_0$ (acting on $X$) and $\Phi$.

\begin{prop}
The flow $(X, G)$ is PI and its enveloping semigroup has $2^{\frak{c}}$ minimal left ideals. 
\end{prop}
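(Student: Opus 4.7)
The plan is to first verify that $(X, G)$ is minimal and strictly PI of order $2$, and then to use the total minimality of $(Z, G_0)$ together with the cocycle structure of $\Phi$ to produce a family of $2^{\mathfrak{c}}$ pairwise equivalent minimal idempotents in $E(X, G)$. By Theorem \ref{exe-envel}(3), each such idempotent lies in a distinct minimal left ideal, while the upper bound $2^{\mathfrak{c}}$ is automatic from $|E(X, G)| \leq |X|^{|X|} = 2^{\mathfrak{c}}$. The projection $\pi_X : X \to Z$ is $G$-equivariant (as $\Phi$ fixes the first coordinate), and the involution $\sig(z, \ep) = (z, -\ep)$ commutes with both $G_0$ and $\Phi$, so $\pi_X$ is a $\Z_2$-group extension and in particular equicontinuous. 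Combined with proximality of $(Z, G_0)$, this exhibits the tower $\{pt\} \leftarrow (Z, G_0) \leftarrow (X, G)$ as a strictly PI tower of order $2$. Minimality of $(X, G)$ follows because the $G_0$-orbit of any $(z_0, \ep_0)$ is dense in $Z \times \{\ep_0\}$, applying $\Phi$ to a point $(z, \ep_0)$ with $z \in U$ produces $(z, -\ep_0)$, and another $G_0$-application then fills $Z \times \{-\ep_0\}$.

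Every $g \in G$ acts on $X$ in the form $g(z, \ep) = (\ga_g z, \psi_g(z) \ep)$ with $\ga_g \in G_0$ and $\psi_g : Z \to \Z_2$ continuous, so every $p \in E(X, G)$ has the form $p(z, \ep) = (p_0 z, k_p(z) \ep)$ with $p_0 \in E(Z, G_0)$ and $k_p : Z \to \Z_2$. The total minimality hypothesis forces $E(Z, G_0) = Z^Z$: one can prescribe $\ga z_i \in V_i$ on any finite set with distinct targets, hence $G_0$ is dense in $Z^Z$ in the product topology. Consequently the minimal idempotents of $E(Z, G_0)$ are exactly the constant maps $c_w : z \mapsto w$, $w \in Z$, forming the (unique) minimal left ideal.

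The central construction is the following: for any fixed $w \in Z$ and any function $k : Z \to \Z_2$ with $k(w) = 1$, the map $u_{w,k}(z, \ep) = (w, k(z)\ep)$ lies in $E(X, G)$. Indexing by finite $F \subset Z$ and accuracy $\eta > 0$, I would construct $g_F = \ga_2^F\, \Phi\, \ga_1^F \in G$ in two stages: using total minimality and the clopen-ness of $U$, pick $\ga_1^F \in G_0$ so that $\ga_1^F z \in U$ iff $k(z) = -1$ for every $z \in F$, which ensures $\phi(\ga_1^F z) = k(z)$ on $F$; then, applying total minimality to the distinct points $\{\ga_1^F z : z \in F\}$ with targets clustered within $\eta$ of $w$, pick $\ga_2^F \in G_0$ with $\ga_2^F \ga_1^F z$ within $\eta$ of $w$ for each $z \in F$. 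Then $g_F(z, \ep) = (\ga_2^F \ga_1^F z, \phi(\ga_1^F z) \ep)$ approximates $u_{w,k}(z, \ep)$ on $F$, and compactness of $X^X$ produces a subnet converging pointwise to $u_{w,k}$. A direct calculation gives $u_{w,k}^2 = u_{w,k}$ (using $k(w) = 1$); moreover the flip $\sig$ itself lies in $E(X, G)$ (take $g_F$ with $\ga_1^F z \in U$ and $\ga_2^F \ga_1^F z \approx z$ on $F$), and a short computation yields $u_{w,k}\, E(X, G)\, u_{w,k} = \{u_{w,k},\, u_{w,k}\, \sig\, u_{w,k}\} \cong \Z_2$, which is a group, so $u_{w,k}$ is a minimal idempotent.

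For distinct $k_1, k_2$ with $k_i(w) = 1$, one checks directly that $u_{w, k_1} u_{w, k_2} = u_{w, k_2}$ and $u_{w, k_2} u_{w, k_1} = u_{w, k_1}$, so the two minimal idempotents are equivalent in the sense of Theorem \ref{exe-envel}(3). Since within a single minimal left ideal $M$ each minimal idempotent is equivalent only to itself (if $u, v \in M$ are idempotents with $uv = v$, then $v \in uM$, a group with identity $u$, forcing $v = u$), the idempotents $u_{w, k_1}$ and $u_{w, k_2}$ must lie in distinct minimal left ideals. The set $\{k : Z \to \Z_2 \mid k(w) = 1\}$ has cardinality $2^{|Z|-1} = 2^{\mathfrak{c}}$, completing the lower bound. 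The main obstacle is the two-stage approximation in the third paragraph: we must exploit the factorization $g = \ga_2\, \Phi\, \ga_1$ to decouple control of the base projection (via $\ga_2$) from control of the cocycle (via $\phi \circ \ga_1$), which is possible only because total minimality grants two \emph{independent} degrees of freedom in $G_0$ and because $U$ is clopen, so the cocycle constraint on a finite set becomes an open condition that can be achieved exactly rather than only in the limit.
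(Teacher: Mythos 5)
Your proof is correct and follows essentially the same route as the paper: a two-stage approximation $\ga_2\,\Phi\,\ga_1$ (the paper's $\beta\Phi\al$ with $\al(E)\subset U$, $\al(F)\subset U^c$, $\beta$ contracting toward a base point) producing limit idempotents that collapse $Z$ to a point and flip the $\Z_2$-coordinate according to a prescribed subset, followed by the observation that these $2^{\frak{c}}$ idempotents are pairwise equivalent and hence lie in distinct minimal left ideals. Your parametrization by functions $k:Z\to\Z_2$ rather than subsets $A\subset Z$, and your explicit verification of minimality of the idempotents via the group criterion for $uEu$ (which the paper leaves to ``a moment's reflection''), are only cosmetic refinements of the same argument.
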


\begin{proof}
By construction $(X, G)$ is a group extension (with fiber group $\Z_2$) of a proximal flow.
Thus it is a strictly PI flow of order $2$. It is easy to check that it is minimal.
Fix an arbitrary point $z_0 \in Z$ and let $\{V_n\}_{n \in \N}$ be a nested sequence of clopen neighborhoods of 
$z_0$ with $\bigcap V_n = \{z_0\}$.

Given, any two nonempty disjoint finite sets $E, F \subset Z$ and an $n \in \N$
choose elements $\al = \al_{E, F, n}$ and $ \beta = \beta_{E, F, n}$  in $G_0$ such that
(i) $\al(E) \subset U$, (ii) $\al(F) \subset U^c$ and (iii) $\beta(\al(E) \cup \al(F)) \subset V_n$. 
Then form the product 
$$\ga_{E, F, n} = \beta \Phi \al.
$$

Given an infinite subset $A \subset Z$ with an infinite complement $B = Z \setminus A$
we consider the directed set $\{(E, F, n) : E \subset A, F\subset B, n \in \N \}$,
where $(E , F, n) \prec (E' , F', n')$
iff $E \subset E', F \subset F'$ and $n \le n'$.
A moment's reflection will convince the reader that 
for $A \subset Z$ such that $z_0 \not\in A$,
the net $\{\ga_{E, F, n}\}\subset G$ 
converges to a minimal idempotent
$u_A \in E(X, G)$ defined by the formula
$$
u_A(z,\ep) =  \begin{cases} (z_0, -\ep) & z \in A\\  (z_0,\ep) & z \in B  \end{cases}
$$
Clearly $u_A u_{A'} = u_{A'}$ for any choice of sets $A, A' \subset Z$ as above,
i.e. $u_A$ and  $u_{A'}$ are equivalent minimal idempotents,
and it follows that $E(X, G)$  has at least $2^{\frak{c}}$ minimal left ideals.
By cardinality arguments ${\rm{mi}}(X, G) \le 2^{\frak{c}}$ and our proof is complete. 
\end{proof}

\br

\section{A cascade with $2^\frak{c}$ minimal left ideals}\label{final}

In this last section we show how to construct a minimal PI cascade with $2^\frak{c}$ minimal left ideals.
The construction is similar to the constructions described in Sections \ref{cascadec} and \ref{cantor}.

Now, back to the context of Section \ref{cascadec}.
Let $Y_0 = S^2$, the $2$-sphere in $\R^3$, 
and let $\mathcal{G}_0 = \Homeo_0(S^2)$ denote the connected component of the identity of 
the Polish group $\Homeo(S^2)$.
We observe that $\mathcal{G}_0 =\Homeo_0(S^2)$
acts totally minimally and extremely proximaly on $S^2$.
Let $K = S^1 = \{\zeta \in \C : |\zeta|=1\}$ be the circle group and set $Y = S^2 \times S^1 = Y_0 \times K$. 


Choose two open sets $U_1, U_2 \subset Y_0$ with
$\ol{U_1} \cap \ol{U_2} = \emptyset$.
Let $\phi : Y_0 \to [0,1]$ be a continuous function such that
$$
\phi(y) = \begin{cases} 1  & y \in U_1\\  0 & y \in U_2  \end{cases}
$$
Now define one parameter family of homeomorphism $\Phi_t$ of $Y$ by the formula
$$
\Phi_t(y, \zeta) = (y, e^{i\pi t \phi(y)}\zeta).
$$
We set $\Phi=\Phi_1$, so that $\Phi(y,\zeta) = (y,\zeta)$ for $\zeta \in U_2$ and $(y,-\zeta)$ for $\zeta \in U_1$. 
By abuse of notation, given $g \in \mathcal{G}_0$, we also denote by the same letter $g$ the homeomorphism 
$g \times \id$ of $Y = Y_0 \times S^1$. Thus $g (y, \zeta) = (g y, \zeta)$.
Denote by $\mathcal{G}$ the subgroup of $\Homeo(Y)$ generated by the following three 
path-wise connected subgroups: (i) $\mathcal{G}_0$ (acting on $Y$), (ii) $\{\Phi_t \ | \ t \in \R \}$ and (iii) $\id \times K$. 
Clearly $\mathcal{G}$ is itself path-wise connected. 
Also one can easily check that $K$ is central in $\mathcal{G}$.

%

\begin{prop}\label{claim2}
The flow $(Y, \mathcal{G})$ is PI and its enveloping semigroup has $2^{\frak{c}}$ minimal left ideals. 
\end{prop}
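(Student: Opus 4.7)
My plan is to mimic the Cantor-set construction of Section \ref{cantor}, now in the smooth setting provided by the $K$-group extension $(Y, \mathcal{G}) \to (Y_0, \mathcal{G})$. The PI conclusion is immediate: since both $\Phi_t$ and $K$ act trivially on the first coordinate of $Y = Y_0 \times K$, the action of $\mathcal{G}$ on $Y_0$ factors through $\mathcal{G}_0 = \Homeo_0(S^2)$, which by assumption is extremely proximal; hence $(Y_0, \mathcal{G})$ is proximal minimal, and the centrality of $K$ in $\mathcal{G}$ makes $(Y, \mathcal{G})$ a genuine $K$-group extension of $(Y_0, \mathcal{G})$. It follows that $(Y, \mathcal{G})$ is strictly PI of order $2$.

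To handle the minimal ideals, I would first describe $E(Y, \mathcal{G})$ explicitly. Centrality of $K$ forces every $p \in E(Y, \mathcal{G})$ to commute with the $K$-action, so $p(y, \zeta) = (p_0(y), \lambda(y) \zeta)$ for some $p_0 \in E(Y_0, \mathcal{G})$ and some (generally discontinuous) $\lambda \colon Y_0 \to K$. Composition then obeys the semidirect rule $(p_0, \lambda) \cdot (q_0, \mu) = (p_0 \circ q_0, (\lambda \circ q_0) \cdot \mu)$, and $(p_0, \lambda)$ is idempotent iff $p_0^2 = p_0$ and $\lambda(p_0(y)) = 1$ for every $y$. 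Since $(Y_0, \mathcal{G})$ is extremely proximal minimal, its minimal idempotents are precisely the constant maps $c_{y_0}$ for $y_0 \in Y_0$.

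Now fix any $y_0 \in Y_0$ and a nested basis $\{V_n\}_{n\in\N}$ of neighborhoods of $y_0$. For each $A \subset Y_0 \setminus \{y_0\}$ I would construct an idempotent $u_A \in E(Y, \mathcal{G})$ given by $u_A(y, \zeta) = (y_0, \lambda_A(y) \zeta)$, where $\lambda_A(y) = -1$ if $y \in A$ and $\lambda_A(y) = 1$ otherwise. To realize $u_A$ as a limit, given finite $E \subset A$, finite $F \subset Y_0 \setminus (A \cup \{y_0\})$, and $n \in \N$, the total minimality of $\mathcal{G}_0$ on $S^2$ furnishes $\alpha_{E,F,n} \in \mathcal{G}_0$ with $\alpha(E) \subset U_1$ and $\alpha(F \cup \{y_0\}) \subset U_2$, followed by $\beta_{E,F,n} \in \mathcal{G}_0$ shrinking $\alpha(E \cup F \cup \{y_0\})$ into $V_n$. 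Setting $\gamma_{E, F, n} = \beta \Phi \alpha$, the net converges pointwise to $u_A$ along the directed set of such triples, exploiting that $\Phi$ multiplies the $K$-coordinate by $-1$ on $U_1$ and by $+1$ on $U_2$.

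Finally I would verify minimality, distinctness, and pairwise equivalence of the $u_A$'s. Using the semidirect formula, if $w = (q_0, \mu)$ is an idempotent with $u_A w = w u_A = w$, the first equation forces $q_0 = c_{y_0}$, and the second forces $\mu = \lambda_A$, so $w = u_A$; by the standard characterization of minimal idempotents as minimal elements of the preorder $w \le v \Leftrightarrow vw = wv = w$, this yields the minimality of $u_A$. For $A \ne A'$ the functions $\lambda_A, \lambda_{A'}$ differ at some $y$, so $u_A \ne u_{A'}$; and the same semidirect formula gives $u_A u_{A'} = (c_{y_0}, \lambda_A(y_0) \lambda_{A'}) = u_{A'}$ and symmetrically $u_{A'} u_A = u_A$, so by Theorem \ref{exe-envel}(3) the $u_A$'s lie in pairwise distinct minimal left ideals. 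Since there are $2^{\mathfrak{c}}$ choices of $A$ and since $|E(Y, \mathcal{G})| \le \mathfrak{c}^{\mathfrak{c}} = 2^{\mathfrak{c}}$, the conclusion ${\rm{mi}}(Y, \mathcal{G}) = 2^{\mathfrak{c}}$ follows. The main technical point, as in Section \ref{cantor}, is checking the pointwise convergence $\gamma_{E,F,n}(y,\zeta) \to u_A(y,\zeta)$ for every $(y, \zeta) \in Y$, which uses the cofinality of the directed set to eventually place each $y$ into whichever of $E$ or $F$ it belongs.
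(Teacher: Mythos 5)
Your proposal is correct and follows essentially the same route as the paper: the same net $\beta\Phi\alpha$ indexed by triples $(E,F,n)$, converging to the idempotents $u_A(y,\zeta)=(y_0,\lambda_A(y)\zeta)$, with the relation $u_Au_{A'}=u_{A'}$ placing distinct $A$'s in distinct minimal left ideals. Your extra scaffolding --- the explicit ``semidirect'' description of $E(Y,\mathcal{G})$ forced by centrality of $K$, and the verification of minimality of $u_A$ via the idempotent preorder $w\le v\Leftrightarrow vw=wv=w$ rather than the paper's one-line observation that $\gamma u_A(y,\zeta)=(\gamma y_0,\pm\zeta)$ --- is sound but not a different argument.
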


\begin{proof}
By construction $(Y, \mathcal{G})$ is a group extension (with fiber group $S^1$) 
of the extremely proximal flow $(Y_0, \mathcal{G}_0)$.
Thus it is a strictly PI flow of order $2$. It is clearly minimal.
Fix an arbitrary point $y_0 \in Y_0$ and let $\{V_n\}_{n \in \N}$ 
be a nested sequence of clopen neighborhoods of 
$y_0$ with $\bigcap V_n = \{y_0\}$.

Given two nonempty disjoint finite sets $E, F \subset Y_0$ and $n \in \N$
choose elements $\al = \al_{E, F, n}$ and $ \beta = \beta_{E, F, n}$  in $\mathcal{G}_0$ such that
(i) $\al(E) \subset U_1$, (ii) $\al(F) \subset U_2$ and (iii) $\beta(\al(E) \cup \al(F)) \subset V_n$. 
Then form the product 
$$
g_{E, F, n} = \beta \Phi \al.
$$

Given an infinite subset $A \subset Y_0$ with an infinite complement $B = Y_0 \setminus A$
we consider the directed set $\{(E, F, n) : E \subset A, F\subset B \ {\text{finite}}, \  n \in \N \}$,
where $(E , F, n) \prec (E' , F', n')$
iff $E \subset E', F \subset F'$ and $n \le n'$.

It is now easy to see that, for $A$ with $y_0 \not\in A$, 
the net $\{g_{E, F, n}\}\subset \mathcal{G}$ 
converges to the minimal idempotent
$u_A \in E(Y, \mathcal{G})$ defined by the formula
$$
u_A(y, \zeta) =  \begin{cases} (y_0, -\zeta) & y \in A\\  (y_0, \zeta) & y \in B  \end{cases}
$$
(Clearly $u_A$ is an idempotent. To see that it is minimal observe that for $\ga \in \mathcal{G}_0$
we have
$\ga u_A (y, \zeta) = (\ga y_0, \pm \zeta).$
)

It is easy to see that $u_A u_{A'} = u_{A'}$ for any choice of sets $A, A' \subset Y_0$ as above,
i.e. $u_A$ and  $u_{A'}$ are equivalent minimal idempotents.
It follows that $E(Y,\mathcal{G})$ has at least $2^{\frak{c}}$ minimal left ideals.
By cardinality arguments ${\rm{mi}}(Y, \mathcal{G}) \le 2^{\frak{c}}$ and our proof is complete. 
\end{proof}

\br

In \cite{D} M. Dirb\'{a}k extended some of the results of \cite{GW-79} and, among others,
proved the next theorem which we reformulate in terms convenient for our purpose
(see \cite[Theorem 6]{D}).

Let $(Z, \sig)$ be an infinite minimal cascade on a compact metric space $Z$. 
Let $\mathcal{G}$ be a Polish group with a dense path-wise connected subgroup.
Let $C(Z, \mathcal{G})$ denote the Polish space of continuous functions from
$Z$ into $\mathcal{G}$ (the {\em cocycles}) and let $B(Z, \mathcal{G})$ be the collection of 
{\em co-boundaries}; i.e. 
$$
 B(Z, \mathcal{G}) = 
 \{\psi \in C(Z, \mathcal{G}) : \psi(z) = \phi(\sig z) \phi(z)^{-1},
 \ {\text {for some}}\  \phi \in C(Z, \mathcal{G})\},
$$
and let $C'(Z, \mathcal{G}) = \ol{B(Z, \mathcal{G})}$.

\begin{thm}[Dirb\'{a}k]\label{Dir}  
There exists a dense $G_\del$ subset $\mathcal{R} \subset C'(Z, \mathcal{G})$
such that for every cocycle $\phi \in \mathcal{R}$
the corresponding Polish dynamical system
(skew product) defined on $Z \times \mathcal{G}$ by the formula 
$$
T_\phi(z, g) = (\sig z, \phi(z) g),
$$
is topologically transitive.
\end{thm}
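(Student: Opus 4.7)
The plan is to establish Theorem \ref{Dir} by a Baire category argument in the Polish space $C'(Z, \mathcal{G})$. Since $\mathcal{G}$ is Polish, $C(Z, \mathcal{G})$ is Polish in the sup topology, and $C'(Z, \mathcal{G})$ is Polish as a closed subspace. Fix a countable base $\{W_i\}_{i \in \N}$ for the topology of $Z \times \mathcal{G}$. Recall that $T_\phi$ is topologically transitive on this Polish space iff for every pair $(i,j)$ some iterate of $W_i$ meets $W_j$, and then the transitive points form a dense $G_\del$. So it suffices to show that for each $(i,j)$ the set
$$
\mathcal{R}_{i,j} = \{\phi \in C'(Z, \mathcal{G}) : \exists n \in \Z,\ T_\phi^n(W_i) \cap W_j \neq \emptyset\}
$$
is open and dense in $C'(Z, \mathcal{G})$; one then sets $\mathcal{R} = \bigcap_{i,j} \mathcal{R}_{i,j}$ and invokes Baire.

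Openness is immediate from the explicit formula
$$
T_\phi^n(z, g) = \bigl(\sigma^n z,\ \phi(\sigma^{n-1}z) \cdots \phi(z)\, g\bigr),
$$
which depends jointly continuously on $\phi$ and $(z, g)$, so if some $(z, g) \in W_i$ is sent into $W_j$ by $T_{\phi_0}^n$, the same witness works for all $\phi$ sufficiently close to $\phi_0$ in the sup metric. The substantive content is density, which is where I would concentrate.

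Given $\phi_0 \in C'(Z, \mathcal{G})$ and $\ep > 0$, I would first approximate $\phi_0$ within $\ep/2$ by a coboundary $\phi_1(z) = \psi(\sigma z)\psi(z)^{-1}$, which is possible by definition of $C'$. The homeomorphism $\Psi(z, g) = (z, \psi(z) g)$ conjugates $T_{\phi_1}$ to the product map $\sigma \times \id_\mathcal{G}$, taking $W_i, W_j$ to basic opens $W_i', W_j'$. It then suffices to build a coboundary $\xi(z) = h(\sigma z) h(z)^{-1}$ of sup norm $< \ep/2$ whose skew product $T_\xi$ carries some point of $W_i'$ to one of $W_j'$. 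Choose $(z_0, g_0) \in W_i'$ and $(z_1, g_1) \in W_j'$. A direct telescoping computation gives
$$
T_\xi^n(z_0, g_0) = \bigl(\sigma^n z_0,\ h(\sigma^n z_0) h(z_0)^{-1} g_0\bigr),
$$
so it is enough to construct a continuous $h : Z \to \mathcal{G}$ with $h(z_0) = e$ and $h(\sigma^n z_0) = g_1 g_0^{-1}$, subject to $d_\mathcal{G}(h(\sigma z), h(z)) < \ep/2$ uniformly in $z$, where $n$ is large enough that $\sigma^n z_0$ lies near $z_1$ (possible by minimality of $(Z,\sigma)$).

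The hard part will be constructing such an $h$, because the orbit of $z_0$ under a minimal $\sigma$ revisits every neighborhood of $z_0$ infinitely often, so one cannot naively prescribe values along the orbit and extend continuously to $Z$. I would handle this by taking $n$ very large and choosing a small open neighborhood $V$ of $z_0$ whose first $n$ forward $\sigma$-iterates are pairwise disjoint (this is the standard Rokhlin-tower type input available in minimal systems), setting $h$ to be constant outside the union of these iterates, and using the dense path-wise connected subgroup $\mathcal{G}^* \subset \mathcal{G}$ to interpolate a short continuous path from $e$ to an element approximating $g_1 g_0^{-1}$, spread out over the $n$ disjoint iterates so that consecutive values differ by less than $\ep/2$. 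Path-wise connectedness is exactly what ensures such a short continuous path exists, and density of $\mathcal{G}^*$ lets the endpoint be taken arbitrarily close to $g_1 g_0^{-1}$. This yields $\xi \in B(Z, \mathcal{G})$ and hence $\phi = \phi_1 \cdot \xi$ lies in $B(Z, \mathcal{G}) \subset C'(Z, \mathcal{G})$, is within $\ep$ of $\phi_0$, and belongs to $\mathcal{R}_{i,j}$, completing the density step and thus the theorem.
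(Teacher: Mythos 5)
First, a point of reference: the paper does not prove this statement at all --- it is quoted from Dirb\'{a}k \cite[Theorem 6]{D} --- so there is no internal proof to compare against. Your overall architecture (Baire category in the Polish space $C'(Z,\mathcal{G})$, openness of each $\mathcal{R}_{i,j}$ by joint continuity, reduction of the density step to coboundaries, conjugating $T_{\phi_1}$ to $\sigma\times\id$, and then perturbing by a uniformly small coboundary $\xi(z)=h(\sigma z)h(z)^{-1}$ whose holonomy $h(\sigma^n z_0)h(z_0)^{-1}$ along a long orbit segment realizes $g_1g_0^{-1}$) is the standard and correct route to such genericity statements, in the spirit of \cite{GW-79}. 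Two non-abelian slips should be noted: the perturbed cocycle is $z\mapsto\psi(\sigma z)\xi(z)\psi(z)^{-1}$, i.e.\ the coboundary of the pointwise product $\psi h$, not the pointwise product $\phi_1\cdot\xi$; and to pass from $\sup_z d(h(\sigma z),h(z))$ small to $\xi$ uniformly near $e$, and then to closeness of the conjugated cocycles, you need a one-sided invariant metric on $\mathcal{G}$ together with equicontinuity of multiplication on the compact set $\psi(Z)$.

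The genuine gap is in the construction of $h$, which you rightly flag as the hard part but then resolve incorrectly. If $h$ is constant outside the tower $V\cup\sigma V\cup\dots\cup\sigma^nV$ and ramps from $e$ on $V$ up to (approximately) $g_1g_0^{-1}$ on $\sigma^nV$, then (i) $h$ is discontinuous at the boundaries of the levels unless $V$ is clopen, which it is not in the intended applications, and (ii), fatally, the requirement is $\sup_{z\in Z}d(h(\sigma z),h(z))<\ep/2$ over \emph{all} of $Z$, not merely smallness of consecutive differences along the orbit of $z_0$ inside the tower: for $z$ in the top level $\sigma^nV$ whose image $\sigma z$ exits the tower or re-enters a low level, $h(z)\approx g_1g_0^{-1}$ while $h(\sigma z)$ is the outside constant (or a low ramp value), so $\xi(z)$ stays far from $e$ no matter how large $n$ is. The repair is to take $h=\gamma\circ\lambda$, where $\gamma$ is a path in the dense path-wise connected subgroup from $e$ to an approximation of $g_1g_0^{-1}$, and $\lambda=\sum_{k=0}^{M}c_k\,(f\circ\sigma^{-k})$ with $M\ge 2n$, with $V,\sigma V,\dots,\sigma^MV$ pairwise disjoint, with $f$ a Urysohn function equal to $1$ at $z_0$ and vanishing off $V$, and with $(c_k)$ a tent profile rising from $c_0=0$ to $c_n=1$ in steps of $1/n$ and descending back to $c_M=0$ in steps of $1/(M-n)$. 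Pairwise disjointness forces $|\lambda(\sigma z)-\lambda(z)|\le\max(1/n,1/(M-n))$ for every $z\in Z$ (the only possible wrap-around from level $M$ lands in level $0$, where $c_0=0$), uniform continuity of $\gamma$ then yields the required bound, and the Urysohn taper restores continuity. With this replacement your argument goes through.
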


\br

\begin{thm}\label{cas2}
There exists a metric minimal PI cascade (of order $3$) whose enveloping semigroup has 
$2^{\frak{c}}$ minimal left ideals.
\end{thm}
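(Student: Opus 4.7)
The strategy is to mirror the proof of Theorem \ref{cas}, replacing the flow $(\Om, \mathbf{G})$ of Section \ref{sec2}, which has only $\frak{c}$ minimal left ideals, by the flow $(Y, \mathcal{G})$ of Proposition \ref{claim2}, which has $2^{\frak{c}}$ of them, and invoking Dirb\'ak's Theorem \ref{Dir} in place of Proposition \ref{residual} to produce the minimal skew-product cascade.

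First I would fix $(Z, \sig)$ to be a minimal equicontinuous metric cascade, for instance an irrational rotation on the unit circle, and take $(Y, \mathcal{G})$ exactly as in Proposition \ref{claim2}; the group $\mathcal{G}$ is path-wise connected, being generated by the three path-wise connected subgroups $\mathcal{G}_0$, $\{\Phi_t : t \in \R\}$, and $\id \times K$. Applying Theorem \ref{Dir} (after passing, if necessary, to the closure of $\mathcal{G}$ in the Polish group $\Homeo(Y)$, in which $\mathcal{G}$ remains a dense path-wise connected subgroup) produces a dense $G_\del$ set of cocycles $\phi \in C'(Z, \mathcal{G})$ whose associated skew product $T_\phi(z, g) = (\sig z, \phi(z) g)$ on $Z \times \mathcal{G}$ is topologically transitive. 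Pick such a $\phi$ and form the cascade $(X, T) = (Z \times Y, T_\phi)$ by $T(z, y) = (\sig z, \phi(z) y)$. The continuous equivariant surjection $(z, g) \mapsto (z, g y_0)$ then transports a dense orbit of $(Z \times \mathcal{G}, T_\phi)$ to a dense orbit of $(X, T)$, so $(X, T)$ is topologically transitive.

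Next I would verify the structural picture $(X, T) \to (X_0, \tilde T) = (Z \times Y_0, \tilde T_\phi) \to (Z, \sig)$. The first arrow is a $K = S^1$-group extension, since $K$ is central in $\mathcal{G}$ and acts only on the $S^1$-fibre of $Y$, while the second should be arranged to be a proximal extension by further restricting $\phi$ to a dense $G_\del$, in the spirit of Proposition \ref{residual}(3), exploiting the extreme proximality of $\mathcal{G}_0$ on $Y_0 = S^2$. Since a topologically transitive group extension of a minimal system is itself minimal, once $(X_0, \tilde T)$ has been shown to be minimal so is $(X, T)$. The resulting three-step tower then exhibits $(X, T)$ as strictly PI of order $3$: an isometric extension of a proximal extension of the equicontinuous $(Z, \sig)$.

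Finally, the count of minimal left ideals follows the spirit of Theorem \ref{cas}. Equicontinuity of $(Z, \sig)$ forces each minimal idempotent $u \in E(X, T)$ to act as the identity on $Z$, so by the appropriate analogue of Lemma \ref{skew} one has $u(z, y) = (z, u_z y)$ with $u_z \in E(Y, \mathcal{G})$ an idempotent. By a standard net-extraction argument using the minimality of $(X, T)$, each of the $2^{\frak{c}}$ pairwise equivalent minimal idempotents $u_A$ of Proposition \ref{claim2} lifts to a minimal idempotent $\hat u_A \in E(X, T)$ with $(\hat u_A)_{z_0} = u_A$; these lifts remain pairwise equivalent and distinct, furnishing $2^{\frak{c}}$ distinct minimal left ideals, and the matching upper bound $\mathrm{mi}(X,T) \le 2^{\frak{c}}$ is by cardinality. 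The principal obstacle will be the preceding step: Dirb\'ak's theorem on its own provides only topological transitivity, so upgrading this to minimality of the factor $(X_0, \tilde T)$---without which we cannot conclude minimality of $(X, T)$---demands a careful combination of Dirb\'ak's genericity with the extreme proximality of the $\mathcal{G}_0$-action on $Y_0$, most likely by adapting the argument of Proposition \ref{residual}(3).
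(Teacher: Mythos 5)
Your overall architecture is the right one -- combine the flow $(Y,\mathcal{G})$ of Proposition \ref{claim2} with the skew-product machinery over an equicontinuous base, obtaining the tower $(X,T)\to(X_0,\tilde T)\to(Z,\sigma)$ and hence PI of order $3$ -- and this matches the paper. But you misallocate the role of Dirb\'ak's theorem, and as a result the step you dismiss as ``a standard net-extraction argument'' is precisely where your proof breaks. In the paper, minimality of $(X,T)$, the $K$-extension structure, and the proximality of $(X_0,\tilde T)\to(Z,\sigma)$ all come from Proposition \ref{prop} (whose hypothesis $(*)$ is verified via the extreme proximality of $\mathcal{G}_0=\Homeo_0(S^2)$ on $S^2$); one chooses $T_\phi$ in the intersection of the two residual sets. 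Dirb\'ak's theorem is \emph{not} used to get transitivity or minimality of $(X,T)$; it is used for the much stronger fact that the $T_\phi$-orbit of $(z_0,e)$ is dense in $Z\times\mathcal{G}$, i.e.\ dense in the \emph{group}, not merely in the phase space $Z\times Y$. By projecting to $Z\times Y$ you throw this information away and keep only topological transitivity of $(X,T)$, which is far too weak for the counting argument.

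Here is why the lifting step fails as you state it. By Lemma \ref{skew} every $p\in E(X,T_\phi)$ acts fiberwise as $p(z,y)=(\pi_*(p)z,p_zy)$, but the fiber maps $p_{z_0}$ range only over the closure of the cocycle orbit $\{\phi_n(z_0):n\in\Z\}$ in $Y^Y$ -- \emph{not} over all of $E(Y,\mathcal{G})$. Minimality of $(X,T)$ gives no control over this set; a priori none of the $2^{\frak{c}}$ idempotents $u_A$ of Proposition \ref{claim2} need appear as a fiber map of any element of $E(X,T_\phi)$. The density of the orbit of $(z_0,e)$ in $Z\times\mathcal{G}$ is exactly what lets one approximate each group element $g_{E,F,n}=\beta\Phi\al$ by a power $T_\phi^{\nu}$ at $z_0$, so that the limit $q$ of the net $T_\phi^{\nu_{(E,F,n,\ep)}}$ satisfies $q(z_0,y,\zeta)=(z_0,u_A(y,\zeta))$. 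Even then $q$ is not a minimal idempotent, so a second, algebraic, step is needed: take any minimal idempotent $v\in E(X,T_\phi)$, check using proximality of $(X_0,\tilde T)\to(Z,\sigma)$ that $v$ acts on the fiber over $(z_0,y_0)$ as $(y_0,\zeta)\mapsto(y_0,\xi(y_0)\zeta)$ with $\xi(y_0)=1$, so that $vq$ agrees with $q$ on the fiber over $z_0$, and then set $\hat u_A=vq(vq)^{-1}$ to obtain a genuine minimal idempotent with $\hat u_A(z_0,y,\zeta)=(z_0,u_A(y,\zeta))$. Only after this can one run the separation argument ($\hat u_A\hat u_{A'}=\hat u_{A'}$ on the fiber forces distinct minimal left ideals for $A\neq A'$). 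Without Dirb\'ak's density in $Z\times\mathcal{G}$ and the $vq(vq)^{-1}$ device, your proposal has no mechanism for producing the $2^{\frak{c}}$ minimal idempotents inside the enveloping semigroup of the single homeomorphism $T_\phi$.
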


\begin{proof}
Again we are going to apply Proposition \ref{prop}, 
and this time also Theorem \ref{Dir}, 
with the following input:

$(Z,\sig)$ is a fixed metric minimal equicontinuous cascade (e.g. one can take $(Z,\sig)$ to be
an irrational rotation on the unit circle).
As above we let $Y_0 = S^2, Y = Y_0 \times S^1$, 
and consider the corresponding actions of $\mathcal{G}_0$ and $\mathcal{G}$.
We set $X_0 = Z \times Y_0, X = Z \times Y_0 \times S^1$ and define the map $\pi_X : X \to X_0$
as $\pi_X(z, y) = (z, \pi(y))$, where $\pi : (Y, \mathcal{G}) \to (Y_0, \mathcal{G}_0)$. 
Clearly property $(*)$ 
and extreme proximality 
hold for the action of $\mathcal{G}_0$ on $Y_0$.
Therefore, Proposition \ref{prop} 
applies, as well as Theorem \ref{Dir}.
We pick any $T = T_\phi$ in the intersection of
the residual sets provided by these theorems. 
We also fix a point $z_0 \in Z$
such that the orbit of the point $(z_0, e)$, where $e$ is the identity
element of $\mathcal{G}$, is dense in $Z \times \mathcal{G}$.

Given an infinite subset $A \subset Y_0$ with an infinite complement $B = Y_0 \setminus A$
we now consider the directed set $\{(E, F, n, \ep) : E \subset A, F\subset B, n \in \N, \ep >0 \}$,
where $(E , F, n, \ep) \prec (E' , F', n', \ep')$
iff $E \subset E', F \subset F'$, $n \le n'$ and $\ep' < \ep$.
For each $(E , F, n, \ep)$ we choose $\nu_{(E , F, n, \ep)} \in \Z$ such that 
(in $Z \times \mathcal{G}$)
$$
d(T_\phi^{\nu_{(E , F, n, \ep)}}(z_0,e), (z_0, g_{(E , F, n)})) < \ep.
$$ 
If we let $\phi_k(z) = \phi(\sig^{k-1}z) \phi(\sig^{k-2}z)\cdots \phi(\sig z)\phi(z)$,
then
$$
T_\phi^{\nu_{(E , F, n, \ep)}}(z_0,e) = (\sig^{\nu_{(E , F, n, \ep)}} z_0, \phi_{\nu_{(E , F, n, \ep)}}(z_0)).
$$
It now follows that along the directed set $\{(E , F, n, \ep)\}$ we have,
in $X = Z \times Y$,
\begin{gather*}
\lim T_\phi^{\nu_{(E , F, n, \ep)}}(z_0, y, \zeta) =(z_0, u_A(y, \zeta)),
\ {\text{and in}}\ E(Y, \mathcal{G}),\\
 \lim \phi_{\nu_{(E , F, n, \ep)}}(z_0) = u_A,
\end{gather*}
where
$$
u_A(y, \zeta) = \begin{cases} (y_0, -\zeta) & y \in A\\  (y_0, \zeta) & y \in B = A^c  \end{cases}
$$
Thus, denoting $q = \lim T_\phi^{\nu_{(E , F, n, \ep)}}$, the limit of the net $T_\phi^{\nu_{(E , F, n, \ep)}}$
in $E(X, T_\phi)$, we have
\begin{equation}\label{eq1}
q(z_0, y, \zeta) = (z_0, u_A(y, \zeta)) =  (z_0, y_0, \pm \zeta). 
\end{equation}

\br

Let now $v$ be any minimal idempotent in $E(X,T)$. 
Clearly $v$ restricts to $Z$ as the identity.
By Lemma \ref{skew},
for any $y \in Y$, $v(z_0, y) = (vz_0, v_{z_0}y) = (z_0, v_{z_0}y)$, where $v_{z_0}$
is an idempotent in $E(Y, \mathcal{G})$. We let $\pi_*v_{z_0}=\tilde{v}_{z_0}$
be the corresponding idempotent in $E(Y_0, \mathcal{G}_0)$.
Because $v$ is a minimal idempotent and because the extension 
$P : (X_0, \tilde{T}) \to (Z, \sig)$ is 
proximal, 
it follows that
$\tilde{v}_{z_0}$ is a minimal idempotent in $E(Y_0, \mathcal{G}_0)$;
i.e. its range is a singleton, one point set (in $Y_0 = S^2$). We can further require, as we may, that 
this single point be $y_0$.
We now have
\begin{equation}\label{eq2}
v_{z_0}(y, \zeta) = (y_0, \xi(y)\zeta), \qquad \ \forall\ (y, \zeta) \in Y = Y_0 \times S^1,
\end{equation}
for some function $\xi : Y_0 \to S^1$ and, as $u_{z_0}$ is an idempotent, we must have
$\xi(y_0) =1$.

Next consider the element $vq \in E(X, T_\phi)$. It belongs to some minimal left ideal, say
$I \subset   E(X, T_\phi)$ and (by (\ref{eq1}) and (\ref{eq2}))
\begin{gather*}
vq(z_0, y, \zeta) = v(z_0, y_0, \pm \zeta) = (z_0, y_0, \xi(y_0)( \pm \zeta)) = \\
 (z_0, y_0, \pm \zeta)
= q(z_0, y, \zeta) = (z_0, u_A(y, \zeta)).
\end{gather*}
Denote by $(vq)^{-1}$ the inverse of $vq$, in the maximal subgroup of $I$ which contains $vq$,
and let $\hat{u}_A = vq (vq)^{-1} = (vq)^{-1}vq$. Then $\hat{u}_A $ is a minimal idempotent in $I$ and
$$
\hat{u}_A(z_0, y, \zeta) = (z_0, u_A(y, \zeta)).
$$

Again it is easy to check that, restricted to the fiber of $X$ over $z_0$,
$\hat{u}_A \hat{u}_{A'}= \hat{u}_{A'}$ for any choice of sets 
$A, A' \subset Y_0$ as above.
This implies that for $A \not = A'$ the minimal idempotents $\hat{u}_A$ and  $\hat{u}_{A'}$
belong to different minimal left ideals in $E(X, T)$.
In fact, if they belong to the same minimal left ideal then, in $E(X,T)$ we have
$\hat{u}_A \hat{u}_{A'}= \hat{u}_{A}$, which, in turn, will imply
the same equality for the restriction on the fiber over $z_0$, leading to 
the equality $\hat{u}_A = \hat{u}_{A'}$ on this fiber, which is a contradiction. 
It follows that $E(X,T_\phi)$  has at least $2^{\frak{c}}$ minimal left ideals.
By cardinality arguments ${\rm{mi}}(X, T_\phi) \le 2^{\frak{c}}$ and our proof is complete. 
\end{proof}

\section{Appendix : A proof of Lemma \ref{idemp}}\label{Ap}

We first recall the following lemma of Furstenberg from \cite[Lemma 2]{Fu-76}.
Let $V$ be a finite dimensional real linear space. $P(V)$ will denote the corresponding projective space. 
If $v \in V$, $\bar{v}$ will denote the corresponding point of $P(V)$; if $W$ is a subspace of $V$, 
$\ol{W}$ will designate the corresponding linear subvariety of $P(V)$. 
Finite unions of linear subvarieties will be called {\em quasi-linear subvarieties}. 
As for all algebraic subvarieties, these satisfy the descending chain condition. This leads to:

\begin{lem}
 Let $\tau_n \in GL(V)$  and let $\bar{\tau}_n$ denote the corresponding projective transformations. 
 Assume $\det{\tau_n} =1$ and $\|\tau_n \| \to \infty$, where $\| \cdot \|$  
 is a suitable norm on the linear endomorphisms of $V$. 
 There exists a transformation $\pi$ of $P(V)$
whose range is a quasi-linear subvariety $\subsetneq P(V)$, and a sequence 
$\{n_k\}$ with $\bar{\tau}_{n_k}(x) \to \pi(x)$ for every $x \in P(V)$.
\end{lem}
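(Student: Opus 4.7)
The plan is to use operator-norm limits of rescaled $\tau_n$ together with the compactness of the unit ball in $\End(V)$. First, pass to a subsequence on which $\tau_n/\|\tau_n\|$ converges to some $T_1\in\End(V)$ with $\|T_1\|=1$. Because $\det\tau_n=1$ and $\|\tau_n\|\to\infty$, we have $\det T_1=\lim \det(\tau_n)/\|\tau_n\|^d=0$, so $W_1:=\ker T_1\subsetneq V$. For every $\bar v\in P(V)\setminus P(W_1)$, writing $\bar\tau_n(\bar v)=\overline{(\tau_n/\|\tau_n\|)v}$ shows $\bar\tau_n(\bar v) \to \overline{T_1 v}$, a point of the proper linear subvariety $P(\mathrm{image}\,T_1)$.

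To handle points of $P(W_1)$, I would iterate: extract a further subsequence on which $\tau_n|_{W_1}/\|\tau_n|_{W_1}\|$ converges to a nonzero $T_2:W_1\to V$, set $W_2:=\ker T_2$, and get $\bar\tau_n(\bar v)\to\overline{T_2 v}$ for $\bar v\in P(W_1)\setminus P(W_2)$. Continuing, and using that $\dim W_p$ strictly decreases, the process terminates after at most $d=\dim V$ stages and produces a strictly decreasing filtration $V=W_0\supsetneq W_1\supsetneq\cdots\supsetneq W_m=\{0\}$ together with linear maps $T_p:W_{p-1}\to V$. Define $\pi:P(V)\to P(V)$ piecewise by $\pi(\bar v):=\overline{T_p v}$ whenever $\bar v\in P(W_{p-1})\setminus P(W_p)$.

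By construction $\mathrm{range}(\pi)\subset\bigcup_{p=1}^m P(\mathrm{image}\,T_p)$, a finite union of linear projective subvarieties, hence a quasi-linear subvariety. It is properly contained in $P(V)$ because $\dim\mathrm{image}\,T_p\le\dim W_{p-1}\le d-1$ for every $p\ge 1$ (the bound for $p=1$ comes from the rank drop $\det T_1=0$; the subsequent bounds from $W_1\subsetneq V$), so each piece has positive codimension in $P(V)$ and a finite union of such pieces cannot exhaust $P(V)$. The main technical point I expect to require care is choosing \emph{one} subsequence $\{n_k\}$ witnessing $\bar\tau_{n_k}(\bar v)\to\pi(\bar v)$ for \emph{every} $\bar v\in P(V)$ simultaneously; this is settled by a diagonal argument, extracting at each of the finitely many stages above a subsequence of the previous one, so that the final subsequence works on every shell $P(W_{p-1})\setminus P(W_p)$ at once.
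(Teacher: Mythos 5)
Your argument is correct and is essentially the standard proof of Furstenberg's lemma: normalize by the operator norm, extract a convergent subsequence with singular limit, and recurse on the projectivized kernel, with termination guaranteed by the strictly decreasing dimensions (the same descending-chain phenomenon the paper alludes to just before the statement). Note that the paper itself offers no proof here --- it simply recalls the result from Furstenberg's \emph{A note on Borel's density theorem} --- so there is nothing in the text for your argument to diverge from, and your finitely-many-stages nested subsequence extraction correctly supplies the single sequence $\{n_k\}$ required by the statement.
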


In our case, where $\dim(V) =2$, the transformation $\pi$ is either of the form $\bar{g}$ with $g \in SL_2(\R)$,
or it is has the form $\pi = \pi_{x_0, x_1, x_2}$, where $\pi(x) = x_0$ for every $x \in \mathbb{P}^1 \setminus \{x_1\}$
and $\pi(x_1) =x_2$ (possibly with $x_0 = x_2$). Note that 
in this case the range of $\pi$ consists of at most two points.

\begin{proof}[Proof of Lemma \ref{idemp}]
(1)   
Consider first the action of $PSL_2(\R)$ on $Y = \mathbb{P}^1$.
If $\bar{g}y =y$ for $g \in PSL_2(\R)$ and every $y \in Y$ then $ge_i = \lambda_i e_i, \ i=1,2$,
with $\lambda_1 \lambda_2 =1$, so that 
$g=\begin{psmallmatrix}\lambda_1 & 0 \\ 0 & \lambda_2\end{psmallmatrix}$.
If $|\lambda_1| \not = |\lambda_2|$ then $g\begin{psmallmatrix}1 \\  1\end{psmallmatrix} =
\begin{psmallmatrix}\lambda_1 \\ \lambda_2\end{psmallmatrix}$ and
$\ol{\begin{psmallmatrix}1 \\  1\end{psmallmatrix}} \not = 
\ol{\begin{psmallmatrix}\lambda_1 \\ \lambda_2\end{psmallmatrix}}$,
which contradicts our assumption. Thus $|\lambda_1| = |\lambda_2|$, hence $g = \id$ in $PSL_2(\R)$.
We conclude that the map $g \mapsto \bar{g}$, from $PSL_2(\R)$ to $E(Y, PSL_2(\R))$ is a continuous injection.

Next assume that $\bar{g}_n y \to \bar{g} y$ for $g \in PSL_2(\R)$, a sequence $g_n \in PSL_2(\R)$,
and every $y \in Y$
\footnote{It follows from Furstenberg's lemma that the topological space $E(Y, PSL_2(\R))$
is Fr\'{e}chet; i.e. its topology is determined by sequences, rather than nets.
In other words, the dynamical system  $(Y, PSL_2(\R))$ is {\em tame}, see \cite{Gl-17}.}.
By Furstenberg's lemma (and its proof) we have 
pointwise convergence $g_n \to h$, with $h$ a semi-linear map $\R^2 \to \R^2$.
In particular $\bar{g}y = \bar{h}y$ for every $y \in Y$.
But, as observed above, if $h$ is not a linear map then $\bar{h}$ has range which consists of at most two points;
so it follows that $h$ is an invertible linear map. Now, the equality $\bar{g} = \bar{h}$ implies that $g = h$
as elements of $PSL_2(\R)$. Thus the map $g \mapsto \bar{g}$, from $PSL_2(\R)$ to $E(Y,PSL_2(\R))$ is a
homeomorphism.
Now, when dealing with $X$, the space of rays, instead of $Y = \mathbb{P}^1$,
we have the canonical surjective map $E(X, SL_2(\R)) \to E(Y, PSL_2(\R))$. 
Again the map $g \mapsto \tilde{g} \in E(X, SL_2(\R))$ is clearly a continuous injection and,
as we now in a situation where rays are mapped onto rays, it follows easily that the assumption
``$\tilde{g}_n x \to \tilde{g} x$ for $g \in SL_2(\R)$, a sequence $g_n \in SL_2(\R)$,
and every $x \in X$"  implies that $g_n \to g$ in $SL_2(\R)$. Thus $g \mapsto \tilde{g}$ is
indeed a homeomorphism from $SL_2(\R)$ onto its image in $E(X, SL_2(\R))$.

(2) This is easily checked.

(3) - (4)
It follows from the discussion so far that an element $p$ of $E(Y, PSL_2(\R))$
is either of the form $p = \bar{g}$ for some $g \in PSL_2(\R)$, or it has the form
$p = \bar{h}$ for some some semi-linear map $h$. If in addition $p^2 =p$ is an idempotent
then either $p = \id$ or it has the form $\pi = \pi_{y_0}$, where $\pi(y) = y_0$ for every
$y \in Y$.
Now observe that there are exactly two possible liftings of $p$ to minimal  idempotents
$u, u'$  in  $E(X, SL_2(\R))$
\footnote{This is because $X \cong S^1$ is $1$-dimensional; the situation is radically
different when one goes to $S^2$ and $\mathbb{P}^2$ (see the proof of Theorem \ref{cas2})}
as claimed. This proves part (4) and part (3) follows as well.
\end{proof}

\br

\br

\br

\end{document}